\newtheorem{theorem}{Theorem}
\newtheorem{lemma}{Lemma}
\newtheorem{conjecture}{Conjecture}
\newtheorem{corollary}{Corollary}
\newcommand{\ph}{\varphi}
\newcommand{\eps}{\varepsilon}
\newcommand{\hb}{1/2}
\newcommand{\zeile}{\vspace{\baselineskip}}
\newcommand{\N}{\mathbb{N}}
\newcommand{\R}{\mathbb{R}}
\newcommand{\C}{\mathbb{C}}
\begin{document}
\markboth{Karin Halupczok}%
{Goldbach's problem with primes in APs
and in short intervals}

\title{Goldbach's problem with primes in arithmetic progressions
and in short intervals} 

\author{Karin Halupczok}

\maketitle

\begin{abstract}
Some mean value theorems in the style of Bombieri-Vinogradov's theorem
are discussed. They concern binary and ternary additive problems
with primes in arithmetic progressions and short intervals.
Nontrivial estimates for some of these mean values are given.
As application inter alia, we show that for large odd $n\not\equiv 1\:(6)$,
Goldbach's ternary problem 
$n=p_1+p_2+p_3$ is solvable with primes $p_{1},p_{2}$ in short intervals
$p_i \in [X_i,X_i+Y]$ with $X_{i}^{\theta_{i}}=Y$, $i=1,2$, and
$\theta_{1},\theta_{2}\geq 0.933$ such that 
$(p_{1}+2)(p_{2}+2)$ has at most $9$ prime factors. 
\end{abstract}


\zeile\zeile
\textbf{Notations:} 
By $p,p_1,p_2,p_3$ we denote prime numbers. 
The symbol $X\asymp Y$ means $X\ll Y\ll X$, and the
symbol $n\sim N$ denotes the range $N\leq n<2N$ for $n$.
We write $a\:(q)$ for a residue class $a\mod q$. 
Throughout, a star at a residue sum or maximum means that the
sum or maximum goes over all reduced residues.
By $\tau(q)$ we denote the number of positive divisors of $q$,
and by $\nu(q)$ the number of prime factors of $q$.
The symbol $P_{s}$ stands for a pseudo-prime of type $s$, that is a
positive integer with at most $s$ prime factors.
Further, $\varepsilon$, $\varepsilon_{1}$ and $\varepsilon_{2}$
are small positive real constants.
By $A>0$ we denote a given positive constant, and $B=B(A)>0$ denotes
a positive constant depending only on $A$. All implicit constants
may depend on $A$ and $\eps$, $\eps_{1}$, $\eps_{2}$.

By $Q_{1}$ and $Q_{2}$ we denote real numbers $\geq 1$ serving as
bounds for the moduli of the considered arithmetic progressions.

For the least common multiple of two integers $a$ and $b$,
we write $[a;b]$, and $(a,b)$ denotes their greatest common divisor.

Considering intervals, we use the notation
$[X, X+Y]$ for the set of integers $n$ with $X<n\leq X+Y$,
where $X$ and $Y$ denote real numbers $\geq 2$.
We call such an interval short, if $Y=X^{\theta}$
for a real number $0<\theta<1$. 

The real numbers $R,Y,X,Y_{1},Y_{2},X_{1},X_{2}\geq 2$ are considered to be 
sufficiently large, being at least as big as some constant depending
only on $A$ and the $\varepsilon$, $\varepsilon_{1}$, $\varepsilon_{2}$.
Further $L:=\log Y$.

The statements of the Theorems 1,2,3,4,6 in this article
begin with ``For all $A>0$ there is a $B=B(A)>0$ such that for all
$R,Y,X,X_{1},\dots$ (list of occurring parameters) the following holds:''.
Theorems 5,7 begin with
``For all
$A>0$ and all $R,Y,X,X_{1},\dots$ (list of occurring parameters) 
the following holds:''
This sentence is left out for an easier reading.

\section{Introduction}
\label{intro}

\subsection{Statement of results}

This article examines binary and ternary additive problems with primes in 
arithmetic progressions (APs for short) and in short intervals. We study
what kind of mixtures of such conditions on the prime summands
are treatable and where the limits of current methods are,
especially when treating ternary problems. 
We show two such ternary theorems resulting from two different
approaches and give corollaries for additive problems with 
almost-twin primes in short intervals.
Here we call a prime $p$ almost-twin, if $p+2$ is an
almost-prime $P_{s}$ for some positive integer $s$.

The first approach works with an estimate that goes back
to Kawada in \cite{Kaw} and leads to the following Theorem
\ref{th:1}, in which we consider the ternary Goldbach problem with
two primes in APs, both lying in short intervals
being of the same length $Y$:

\begin{theorem}
  \label{th:1}
Let $n\geq X_1+X_2+2Y$ be odd, let $n\ll X_{1}Y$,
let $X_{2}\geq Y\gg (n-X_{1})^{2/3+\varepsilon_1}$, let $X_{1}\geq 
Y\gg X_1^{3/5+\varepsilon_2}$ 
and assume that $Q_{i}\ll YX_{i}^{-1/2}L^{-B}$ for $i=1,2$.

Then for any fixed integers $a_{1},a_{2}$ with $a_{1}\leq
n-X_{1}-Y$ we have
\begin{multline*}
      \sum_{q_1\leq Q_1}
      \sum_{q_2\leq Q_2}
      \Big| \sum_{\substack{p_1+p_2+p_3=n\\p_i \in [X_i,X_i+Y],
          \;i=1,2\\ p_i\equiv a_i\: (q_i), \;i=1,2}}
     \log p_1 \log p_2 \log p_3 - 
\mathfrak{T}(n, q_1,a_1,q_2,a_2) Y^{2} \Big| \\  \ll Y^2L^{-A}.
\end{multline*}
\end{theorem}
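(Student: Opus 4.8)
The plan is to run the Hardy--Littlewood circle method on the inner sum and carry the double sum over $q_1,q_2$ through the whole argument. Set
\[
S_i(\alpha,q_i,a_i)=\sum_{\substack{X_i<p\le X_i+Y\\ p\equiv a_i\:(q_i)}}\log p\;e(p\alpha),\quad i=1,2,\qquad
S_3(\alpha)=\sum_{p_3\in J}\log p_3\;e(p_3\alpha),
\]
where $J$ is the interval of length $\le 2Y$, with left endpoint at least $n-X_1-X_2-2Y$, into which $p_3=n-p_1-p_2$ is forced by $p_i\in[X_i,X_i+Y]$; it is nonempty by $n\ge X_1+X_2+2Y$, and its base is $\le n-X_1\ll Y^{3/2}$ by $Y\gg(n-X_1)^{2/3+\eps_1}$. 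Orthogonality gives the inner sum as $\int_0^1 S_1(\alpha,q_1,a_1)S_2(\alpha,q_2,a_2)S_3(\alpha)e(-n\alpha)\,d\alpha$. Fixing a large $C=C(A)$, I dissect $[0,1]$ into major arcs $\mathfrak M$ about $a/q$ with $q\le L^{C}$ and complementary minor arcs $\mathfrak m$, and estimate the contributions of $\mathfrak M$ and $\mathfrak m$ separately, summing over $q_1\le Q_1$ and $q_2\le Q_2$ only at the end.

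On $\mathfrak M$ I expand $S_1,S_2$ into Dirichlet characters mod $q_1,q_2$ and into residues mod $q$ arising from $e(pa/q)$, so that $S_i$ becomes a combination of prime sums over $[X_i,X_i+Y]$ to the combined moduli $[q,q_i]\ll YX_i^{-1/2}L^{C-B}$, twisted by $e(p\beta)$ with $|\beta|$ small. The pair of principal characters, together with a Siegel--Walfisz/Vinogradov evaluation of the $p_3$-sum on each major arc --- legitimate because $|J|\gg(n-X_1)^{2/3+\eps_1}$ dominates any fixed power of the base of $J$, while $n\ll X_1Y$ keeps the relevant scales commensurate --- produces exactly the asserted main term $\mathfrak T(n,q_1,a_1,q_2,a_2)Y^{2}$, up to $O(Y^2L^{-A})$ from completing the truncated singular series. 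The contribution of all remaining characters, summed over $q_1\le Q_1$, $q_2\le Q_2$, is controlled by a Bombieri--Vinogradov-type mean value for primes in the short intervals $[X_i,X_i+Y]$: here $Y\ll X_i$ gives $L^{C}Q_i\le X_i^{1/2-\delta}$, so the moduli stay within the large-sieve range, and $Y\gg X_1^{3/5+\eps_2}$ lies well inside the Huxley range $Y\ge X_1^{7/12+\eps}$, making the requisite log-free zero-density/large-sieve input for the short interval available; choosing $C,B$ large in terms of $A$ renders this $\ll Y^2L^{-A}$.

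On $\mathfrak m$ the key input is Kawada's minor-arc bound $\sup_{\alpha\in\mathfrak m}|S_3(\alpha)|\ll_A YL^{-A}$, valid precisely because $|J|\gg(n-X_1)^{2/3+\eps_1}$; likewise $Y\gg X_1^{3/5+\eps_2}$ yields $\sup_{\alpha\in\mathfrak m}\bigl|\sum_{X_1<p\le X_1+Y}\log p\,e(p\alpha)\bigr|\ll_A YL^{-A}$. It remains to bound $\sum_{q_1\le Q_1}\sum_{q_2\le Q_2}\int_{\mathfrak m}|S_1S_2S_3|\,d\alpha$, and I expect this to be the main obstacle. One cannot merely extract $\sup_{\mathfrak m}|S_3|$ and apply Cauchy--Schwarz in $q_1,q_2$: that introduces a factor $(Q_1Q_2)^{1/2}\asymp Y(X_1X_2)^{-1/4}$, which is a genuine positive power of $Y$ when $X_1,X_2$ are close to $Y$. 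Instead one decomposes the $\Lambda$-weight in $S_1$ (and, for the short-interval range, in $S_3$) by Vaughan's identity into Type~I and Type~II bilinear forms, and estimates the resulting bilinear sums \emph{after} summing over the moduli $q_1\le Q_1$, $q_2\le Q_2$, using the large sieve together with the second-moment bounds $\sum_{q\le Q_i}\int_0^1|S_i(\alpha,q,a_i)|^2\,d\alpha\ll(Y+Q_i)L^{O(1)}$ and their bilinear analogues. The hypothesis $Q_i\ll YX_i^{-1/2}L^{-B}$ is exactly the threshold at which this averaged bilinear estimate closes, and the exponents $2/3+\eps_1$ and $3/5+\eps_2$ are the admissible short-interval ranges for the Type~I/II estimates over $J$, respectively $[X_1,X_1+Y]$. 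Combining the resulting minor-arc bound $\ll Y^2L^{-A}$ with the major-arc estimate gives the theorem.
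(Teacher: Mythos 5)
Your proposal attempts a direct ternary circle method, which is a genuinely different route from the paper's, and it contains a real gap exactly where you flag the difficulty. The paper never sets up the ternary integral $\int_0^1 S_1S_2S_3 e(-n\alpha)\,d\alpha$ at all. Instead it first establishes a binary mean value (Theorem~\ref{th:binGBtwoprimes}) by the following two-step device: take Kawada's Theorem~\ref{th:kaw} (a binary Bombieri--Vinogradov-type estimate that already carries the sum over $q_2\le Q_2$ \emph{and} an outer sum over $k_1\in[X_1,X_1+Y]$), then apply the Hal\'{a}sz--Montgomery inequality in the form of Lemma~\ref{lHM1} to convert that sum over $k_1$ into a sum over $q_1\le Q_1$ with the congruence $2k_1\equiv a_1\,(q_1)$. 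The hypothesis $Q_1\ll Y^{3/2}X_1^{-1/2}$ (equivalently $Q_1^{2/3}X_1^{1/3}\ll Y$) is precisely what makes Lemma~\ref{lHM1} nontrivial. One then sets $2k_1=n-p_1$, inserts $\log p_1$, and evaluates the resulting main term by the Perelli--Pintz--Salerno short-interval Bombieri--Vinogradov theorem, giving $\mathfrak T(n,q_1,a_1,q_2,a_2)Y^2$. At no point is a minor-arc bound for the ternary problem needed.

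The gap in your plan is the minor-arc estimate $\sum_{q_1\le Q_1}\sum_{q_2\le Q_2}\int_{\mathfrak m}|S_1S_2S_3|\,d\alpha\ll Y^2L^{-A}$. You correctly observe that extracting $\sup_{\mathfrak m}|S_3|$ and using Cauchy--Schwarz in the moduli loses a factor $(Q_1Q_2)^{1/2}$, which destroys the estimate, and you propose to repair this via Vaughan's identity and an averaged bilinear/large-sieve argument. But this is the entire crux of the problem and you do not carry it out; the assertion that ``$Q_i\ll YX_i^{-1/2}L^{-B}$ is exactly the threshold at which this averaged bilinear estimate closes'' is unsupported. Carrying it out would amount to reproving a stronger version of Kawada's theorem (now with a second modulus sum) from scratch. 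The paper's point is precisely that one can sidestep this: the $q_1$-average is introduced \emph{a posteriori} via Hal\'{a}sz--Montgomery at the cost of the restrictive range $Q_1\ll Y^{3/2}X_1^{-1/2}$ (tightened to $Q_1\ll YX_1^{-1/2}L^{-B}$ by the hypothesis $n\ll X_1Y$), so the circle-method work is already done inside Kawada's result. I would also note that you invoke a pointwise minor-arc bound $\sup_{\mathfrak m}|S_3|\ll YL^{-A}$ ``from Kawada,'' but Theorem~\ref{th:kaw} in the paper is a mean value over $k_1$, not a pointwise bound for a single exponential sum; conflating the two is another place the sketch would need repair.
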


The singular series $\mathfrak{T}(n,q_1,a_1,q_2,a_2)$ contains the
whole arithmetic information of the problem.
It is given at the end of Section \ref{seczweiAP} in its Euler
product form.

Using a sieve theorem for a two-dimensional sieve,
we deduce from this result the following ternary corollary.

\begin{corollary}
 \label{cor:1}
  Let $Y$ be large and consider any odd integer $n\not\equiv 1\:(6)$ 
  with $n\geq X_1+X_2+2Y$ and $n\ll X_1 Y$. Then
  the equation $n=p_1+p_2+p_3$ is solvable in primes $p_1,p_2,p_3$
  such that $(p_1+2)(p_2+2)=P_9$, 
  where $p_i \in [X_i,X_i+Y]$,  $X_1^{\theta_{1}}=Y$, 
  $(n-X_1)^{\theta_2}=Y$ with $\theta_{i}\geq 0.933$, $i=1,2$.  
\end{corollary}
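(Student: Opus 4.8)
The plan is to deduce Corollary~\ref{cor:1} from Theorem~\ref{th:1} by a two-dimensional sieve, with Theorem~\ref{th:1} furnishing the required level of distribution. First I would note that the hypotheses of the Corollary contain those of Theorem~\ref{th:1}: the short-interval conditions follow from $\theta_i\ge0.933$ (comfortably above $3/5$ and $2/3$), $n\ll X_1Y$ and $n\ge X_1+X_2+2Y$ are assumed, and one is free to \emph{choose} $Q_i\asymp YX_i^{-\hb}L^{-B}$. Take $a_1=a_2=-2$ in Theorem~\ref{th:1} (admissible, since $-2\le n-X_1-Y$) and restrict to odd moduli (even ones contribute nothing, as $p_i\equiv-2$ would force $p_i=2$). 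Let $P(z)$ be the product of the odd primes below $z$, and consider the sequence
\[
  \mathcal{A}=\bigl\{\,(p_1+2)(p_2+2)\;:\;p_1+p_2+p_3=n,\ p_i\in[X_i,X_i+Y]\ (i=1,2)\,\bigr\}
\]
with each member weighted by $\log p_1\log p_2\log p_3$. Since $p_i$ is a large prime, $p_i+2$ is odd, so removing from $(p_1+2)(p_2+2)$ the odd primes below $z$ removes all of its prime factors below $z$. For odd $d_1\le Q_1$ and $d_2\le Q_2$, detecting $d_i\mid p_i+2$ and applying Theorem~\ref{th:1} with $(q_i,a_i)=(d_i,-2)$ gives
\[
  \sum_{\substack{(p_1+2)(p_2+2)\in\mathcal{A}\\ d_1\mid p_1+2,\ d_2\mid p_2+2}}\!\!\log p_1\log p_2\log p_3\;=\;\mathfrak{T}(n,d_1,-2,d_2,-2)\,Y^2+r(d_1,d_2),
\]
where, crucially, $\sum_{d_1\le Q_1}\sum_{d_2\le Q_2}|r(d_1,d_2)|\ll Y^2L^{-A}$ by Theorem~\ref{th:1}.

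Next I would extract the sieve data from the Euler product of $\mathfrak{T}$. The total mass of $\mathcal{A}$ is $\asymp\mathfrak{S}(n)Y^2$, where $\mathfrak{S}(n)$ is the ternary Goldbach singular series of $n$; each odd prime $\ell$ removes a proportion $\asymp2/\ell$ of $\mathcal{A}$ (about $1/\ell$ from $\ell\mid p_1+2$ and about $1/\ell$ from $\ell\mid p_2+2$), so that $(p_1+2)(p_2+2)$ is sifted as a sequence of \emph{dimension two}; the joint density $\mathfrak{T}(n,d_1,-2,d_2,-2)/\mathfrak{S}(n)$ factors into the two marginal densities up to a correction on the primes dividing $\gcd(d_1,d_2)$, coming from the side condition that $p_3=n-p_1-p_2$ be prime. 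The local density at $2$ is positive exactly when $n$ is odd, and once one also imposes $3\nmid(p_1+2)(p_2+2)$ the local density at $3$ requires $p_1\equiv p_2\equiv2\ (3)$, whence $p_3\equiv n-1\ (3)$, which avoids the residue $0$ only when $n\not\equiv1\ (3)$; together with $n$ odd this is precisely $n\not\equiv1\ (6)$. For such $n$ one has $\mathfrak{S}(n)\gg1$ and all local factors of the sieve are positive, so its main term is genuinely positive.

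I would then run a dimension-two lower-bound sieve, organised so that at every stage the moduli dividing $p_1+2$ stay $\le Q_1$ and those dividing $p_2+2$ stay $\le Q_2$; its total error is then the double sum over $d_1\le Q_1,\ d_2\le Q_2$ already bounded by Theorem~\ref{th:1}, and the effective level for the product $(p_1+2)(p_2+2)$ is $Q_1Q_2$. Since $Y=X_1^{\theta_1}=(n-X_1)^{\theta_2}$ and $X_2\le n-X_1$, the worst case is $X_2\asymp n-X_1$, in which $Q_i\gg X_i^{\theta_i-\hb-\eps}$, so
\[
  \frac{\log(Q_1Q_2)}{\log(X_1X_2)}\;\ge\;\min\!\bigl(\theta_1-\hb,\,\theta_2-\hb\bigr)-\eps\;\ge\;0.433-\eps .
\]
Taking the sifting level $z$ just below $(Q_1Q_2)^{1/\beta}$, where $\beta$ is the sifting limit of the two-dimensional sieve, yields
\[
  \sum_{\substack{p_1+p_2+p_3=n,\ p_i\in[X_i,X_i+Y]\\ \gcd((p_1+2)(p_2+2),\,P(z))=1}}\!\!\!\!\log p_1\log p_2\log p_3\;\gg\;\mathfrak{S}(n)\,Y^2\,V(z)\bigl(f_2(s)+o(1)\bigr)\;>\;0,
\]
with $V(z)\asymp(\log z)^{-2}\asymp L^{-2}$ and $s=\log(Q_1Q_2)/\log z$; the error $\ll Y^2L^{-A}$ is negligible once $A$ is taken large. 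Any triple surviving the sieve has $(p_1+2)(p_2+2)\asymp X_1X_2$ built solely from primes $\ge z$, hence at most $\log(X_1X_2)/\log z+o(1)=\beta/\min(\theta_1-\hb,\theta_2-\hb)+o(1)\le\beta/0.433+o(1)$ prime factors, which for $\theta_i\ge0.933$ and a suitable two-dimensional sieve is $<10$; that is, $(p_1+2)(p_2+2)=P_9$, and positivity of the displayed sum exhibits a solution $n=p_1+p_2+p_3$ with the asserted properties.

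The main obstacle is the sieve step. One cannot split the problem into two independent linear sieves for $p_1+2$ and $p_2+2$, because a product of two lower-bound sieve weights need not be a lower bound; a genuine dimension-two sieve is forced, and its sifting limit $\beta$ (of size roughly $4.3$) is far above the linear value $2$. Hence essentially the whole level budget $\theta_i-\hb\ge0.433$ is spent to push the number of prime factors of $(p_1+2)(p_2+2)$ down to $9$, and the constant $0.933$ is, up to $\eps$, the threshold making $\beta/(\theta_i-\hb)<10$. A related technical point is that the two-dimensional sieve must be arranged to use the two levels $Q_1$ and $Q_2$ separately --- which is exactly what the double sum over $q_1\le Q_1$, $q_2\le Q_2$ in Theorem~\ref{th:1} allows --- rather than the much smaller $\min(Q_1,Q_2)$ to which a crude sifting of the single variable $(p_1+2)(p_2+2)$ would be restricted.
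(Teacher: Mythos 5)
Your proposal diverges from the paper's actual proof in a way that matters. The paper does not attempt a vector-type sieve with two separate levels; it applies Theorem~10.3 of Halberstam--Richert (a weighted sieve for a \emph{single} sequence with a \emph{single} level) directly to the sequence $\mathcal{A}=\{(p_1+2)(p_2+2)\}$, with dimension $\kappa=2$, $\mathcal{P}=\{p:p\neq 2\}$, $\xi\asymp H(X_1,X_2,Y,n)\asymp Y^2L^{-2}$, and level $D=\xi^\alpha L^{-B}=Y^{1-1/(2\theta)}L^{-B}$, $\alpha=\tfrac12-\tfrac1{4\theta}$, $\theta=\min(\theta_1,\theta_2)$. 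The crucial point is that when you decompose $|\mathcal{A}_d|$ as $\sum_{t\mid d}\sum_{s\mid d/t}\mu(s)\cdot(\cdots)$ and bound the remainder by \eqref{eq:Cor4var}, the two moduli $st$ and $d/t$ can each be as large as $d$ itself (take $t=1$ or $t=d$), so the admissible level of distribution for $d$ is $\min(Q_1,Q_2)$, not $Q_1Q_2$. With that level, the element size satisfies $|a|\ll Y^{2/\theta}\asymp\xi^{\alpha\mu}$ with $\mu=4/(2\theta-1)\approx 4.62$, and the explicit formula \eqref{eq:label5} with $\nu_2=4.42$, $\zeta=0.360$ gives $r>8.999$, i.e.\ $r=9$ --- precisely because the HR weighted sieve does far better than the crude ``$\le\log|a|/\log z$ prime factors'' count, which at $\mu\approx4.62$ would only give something around $r=22$.

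Your central heuristic --- that the ``effective level'' for $(p_1+2)(p_2+2)$ is $Q_1Q_2$, hence $\mu_{\mathrm{eff}}\approx 2.31$ --- is exactly what the paper does \emph{not} achieve and does not claim. To realize a genuinely rectangular level $(Q_1,Q_2)$ one needs a two-variable lower-bound sieve (vector sieve, or a bespoke weighted sieve over pairs $(d_1,d_2)$); you allude to this but cite no theorem that actually delivers a positive lower bound at dimension two with independent levels, nor any source for the asserted sifting limit ``$\beta\approx 4.3$''. The numerical margin $\beta<10(\theta-\tfrac12)=4.33$ is razor-thin, so the argument lives or dies on that unsourced constant; the standard $\Lambda^2\Lambda^-$ value is about $4.83$, and even the sharper Diamond--Halberstam--Richert value ($\approx 4.27$) would need to be quoted and combined correctly with the error estimate from Theorem~\ref{th:1}. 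Moreover, passing from ``the sifted sum is positive with $z=(Q_1Q_2)^{1/\beta}$'' to the prime-factor bound is the unweighted, crude step; it is not compatible with the delicate choice of $z$ at the sifting limit, where the lower-bound sieve constant is not bounded away from zero. In short, the setup, the level of distribution, the sieve theorem invoked, and the numerics all differ from the paper's, and the proposal as written has real gaps at exactly the places where it departs from the HR Theorem~10.3 route.

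One smaller omission: to apply the sieve you must extract a \emph{multiplicative} density function $\omega(d)$ from $\mathfrak{T}$ and verify conditions (a)--(c) of the sieve theorem. The paper does this explicitly (formula \eqref{eq:omegad} and the case analysis on $\omega(\ell)$, where the hypothesis $n\not\equiv 1\:(6)$ enters to keep $\omega(3)/3<1$); your sketch asserts positivity of the local factors but does not display the density or check that the dimension is indeed $\kappa=2$.
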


Another variant of this corollary can be deduced as follows.

As a first step for this, we deduce in Section \ref{sth}
a short interval version of Meng's result in \cite{Meng}:

\begin{corollary}
  \label{cor:2}
For all but ${\rm O}(YL^{-A})$ even integers $2k_1\not\equiv 2\:(6)$ with
$k_1\in [X_{1},X_1+Y]$, the equation $2k_1=p_2+p_3$  is solvable in primes
$p_2,p_3$ such that $p_2+2=P_3$, $p_2\in [X_{2},X_2+Y]$, 
where $X_2^\theta = Y$ with $\theta\geq 0.861$, and $X_2+Y\leq
2X_1\ll Y^{3/2-\eps}$. 
\end{corollary}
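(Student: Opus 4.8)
The plan is to run a weighted linear sieve on the shifted primes $p_2+2$, with the level of distribution supplied by the short-interval binary Bombieri--Vinogradov estimate of this paper --- the binary analogue of Theorem~\ref{th:1}, in which $p_2$ runs over $[X_2,X_2+Y]$ and over residue classes to moduli $q_2\le Q_2\asymp YX_2^{-\hb}L^{-B}$. First I would remove an exceptional set of $k_1$. Starting from a mean value bound of the form
\begin{multline*}
\sum_{k_1\in[X_1,X_1+Y]}\;\sum_{q_2\le Q_2}\;
\Bigl|\sum_{\substack{p_2+p_3=2k_1\\ p_2\in[X_2,X_2+Y]\\ p_2\equiv -2\:(q_2)}}\log p_2\log p_3
\;-\;\mathfrak{S}(2k_1,q_2)\,Y\Bigr| \\
\ll\;Y^2L^{-3A},
\end{multline*}
valid in the stated range $X_2+Y\le 2X_1\ll Y^{3/2-\eps}$ (so that $p_3=2k_1-p_2$ is a positive prime of size $\asymp X_1$ in the window where the estimate holds, just as the hypothesis $Y\gg(n-X_1)^{2/3+\eps_1}$ constrains $n$ in Theorem~\ref{th:1}), Chebyshev's inequality gives a set $\mathcal{E}\subset[X_1,X_1+Y]$ with $|\mathcal{E}|\ll YL^{-A}$ outside of which the inner sum over $q_2$ is $\ll YL^{-2A}$. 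From now on fix $k_1\notin\mathcal{E}$ with, in addition, $2k_1\not\equiv 2\:(6)$; these are all but $\mathrm{O}(YL^{-A})$ of the admissible $k_1$.

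Next I would set up the sieve. Let $\mathcal{A}=\mathcal{A}(k_1)$ be the finite sequence in which each shifted prime $p_2+2$ is counted with weight $\log p_2\log p_3$, where $p_2,p_3$ are primes, $p_2+p_3=2k_1$, $p_2\in[X_2,X_2+Y]$; then $|\mathcal{A}|\sim\mathfrak{S}(2k_1)\,Y$, and for odd squarefree $d$ the bound above gives $|\mathcal{A}_d|=g(d)\,\mathfrak{S}(2k_1)\,Y+r_d$, where $g$ is the usual multiplicative density of this twin-type problem ($g(p)=0$ for $p\mid 2k_1+2$; $g(p)=1/(p-1)$ for $p\mid 2k_1$; $g(p)=1/(p-2)$ otherwise, $p$ odd), so that $\sum_{p<z}g(p)\log p\sim\log z$ --- a one-dimensional ($\kappa=1$) sieve, of level $D:=Q_2\asymp YX_2^{-\hb}L^{-B}=X_2^{\theta-\hb-o(1)}$ for $k_1\notin\mathcal{E}$ (the even moduli contribute $0$). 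The exclusion $2k_1\not\equiv 2\:(6)$ is exactly what rules out the degenerate value $g(3)=1$, which occurs precisely when $3\nmid 2k_1$ and $3\nmid 2k_1+2$, i.e.\ $2k_1\equiv 2\:(6)$; in that case $3\mid p_2+2$ in every representation and the linear sieve yields nothing.

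Now apply Richert's weighted linear sieve (in the form of the Halberstam--Richert monograph, Ch.~9, or Greaves' book) to $\mathcal{A}$. Since each element of $\mathcal{A}$ is $\asymp X_2$ while the level is only $D=X_2^{\theta-\hb-o(1)}$, the weighted sieve yields a positive lower bound ($\gg\mathfrak{S}(2k_1)\,YL^{-1}$, with the weights $\log p_2\log p_3$) for the count of $p_2+2$ having at most $r$ prime factors, where $r$ is determined by the usual weighted-sieve inequality in terms of the ratio $(\log D)/(\log X_2)=\theta-\hb$; numerical optimisation in the linear case shows $r=3$ to be admissible exactly once $\theta-\hb\ge 0.361\ldots$, i.e.\ $\theta\ge 0.861$. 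Positivity uses $\mathfrak{S}(2k_1)\gg 1$, which holds for every even $2k_1$. Hence there is a prime $p_2\in[X_2,X_2+Y]$ with $p_3=2k_1-p_2$ prime and $p_2+2=P_3$, and letting $k_1$ range over $[X_1,X_1+Y]\setminus\mathcal{E}$ gives the corollary.

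The hard part is the interplay between the short interval and the sieve level: since the admissible moduli reach only $Q_2\asymp YX_2^{-\hb}$, the sequence $\mathcal{A}$ has $\asymp Y$ terms but each is of size $\asymp X_2$, so the effective logarithmic level $\theta-\hb$ is small and the weighted sieve is driven to the edge of its range; getting $r=3$ rather than $r=4$ is exactly what forces $\theta$ up to $0.861$. Everything else --- the exceptional set, the form of $g$, the lower bound $\mathfrak{S}(2k_1)\gg 1$, and the bookkeeping around $2k_1\not\equiv 2\:(6)$ and the even moduli --- is routine once the mean value input is in place.
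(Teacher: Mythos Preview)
Your proposal is correct and follows essentially the same route as the paper: both proofs feed the short-interval mean value estimate (the paper uses Theorem~\ref{th:binGBtwoprimes} in the unweighted form \eqref{eq:Th4var} with $Q_1=1$, which is Kawada's theorem on the $k_1$-average) into the weighted linear sieve of Halberstam--Richert (Theorem~9.3 in \cite{HR}) for the sequence $\mathcal{A}(k_1)$ of shifted primes $p_2+2$, after discarding an exceptional set of $k_1$. Your numerical condition $\theta-\tfrac{1}{2}>1/\Lambda_3\approx 0.361$ is exactly the paper's inequality $1/\theta<(1-1/2\theta)\Lambda_3$ rewritten, giving $\theta\geq 0.861$; the only cosmetic difference is that you carry the $\log p_2\log p_3$ weights throughout while the paper first removes them by partial summation.
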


By a counting argument, we infer in Section \ref{sth}
a variant of Corollary \ref{cor:1}, using
Corollary \ref{cor:2} and a theorem of Wu in
\cite{Wu2} on the number of Chen primes in short intervals.

\begin{corollary}
  \label{cor:3}
  Let $X_1^{\theta_1}=Y=X_2^{\theta_2}$ be large, where $\theta_1\geq 0.971$
  and $\theta_2\geq 0.861$. Let $n$ be an odd integer $n\not\equiv 1\:(6)$
  with $X_1+X_2+2Y\leq n \ll Y^{3/2-\eps}$. Then
  the equation $n=p_1+p_2+p_3$ is solvable 
  in primes $p_1,p_2,p_3$ such that $p_1+2=P_2$, 
  $p_2+2=P_3$ and $p_i \in [X_i,X_i+Y]$, $i=1,2$.
\end{corollary}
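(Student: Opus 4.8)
The plan is to deduce Corollary \ref{cor:3} from Corollary \ref{cor:2} together with Wu's theorem on Chen primes in short intervals by a short counting argument, building the decomposition $n=p_1+p_2+p_3$ in two stages: first pin down $p_1$ as a Chen prime (so that $p_1+2=P_2$) lying in $[X_1,X_1+Y]$, and then solve the binary problem $n-p_1=p_2+p_3$ with $p_2+2=P_3$ and $p_2\in[X_2,X_2+Y]$ by appeal to Corollary \ref{cor:2}.

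First I would invoke the theorem of Wu in \cite{Wu2}: since $Y=X_1^{\theta_1}$ with $\theta_1\geq 0.971$, there are $\gg YL^{-2}$ primes $p_1\in[X_1,X_1+Y]$ with $p_1+2=P_2$, and Wu's sieve can be run with a congruence condition to a fixed small modulus without any loss in the order of magnitude. We shall need $n-p_1\not\equiv 2\:(6)$, this being the hypothesis under which Corollary \ref{cor:2} applies; since $n$ is odd with $n\not\equiv 1\:(6)$, this forces $p_1\equiv 5\:(6)$ when $n\equiv 3\:(6)$, whereas for $n\equiv 5\:(6)$ every prime $p_1>3$ is admissible. In either case $\gg YL^{-2}$ admissible $p_1$ remain. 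Each such $p_1$ is odd, so $n-p_1=2k_1$ is even, and as $p_1$ runs over $[X_1,X_1+Y]$ the integer $k_1=(n-p_1)/2$ runs over $[M,M+Y/2]$ with $M:=(n-X_1-Y)/2$, distinct $p_1$ yielding distinct $k_1$, and $2k_1\not\equiv 2\:(6)$ by the choice of residue class.

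Next I would apply Corollary \ref{cor:2} with the interval $[M,M+Y]$ in place of $[X_1,X_1+Y]$ and the same $X_2,\theta_2$. Its hypotheses are met: $X_2+Y\leq 2M=n-X_1-Y$ is precisely the assumption $n\geq X_1+X_2+2Y$; $2M\leq n\ll Y^{3/2-\eps}$; $X_2^{\theta_2}=Y$ with $\theta_2\geq 0.861$; and $[M,M+Y/2]\subseteq[M,M+Y]$. Hence all but ${\rm O}(YL^{-A})$ of the even integers $2k_1$ with $k_1\in[M,M+Y]$ and $2k_1\not\equiv 2\:(6)$ admit a representation $2k_1=p_2+p_3$ in primes with $p_2+2=P_3$ and $p_2\in[X_2,X_2+Y]$. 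Choosing $A=3$ (Corollary \ref{cor:2} holds for every $A>0$), the exceptional set has size ${\rm O}(YL^{-3})={\rm o}(YL^{-2})$, while the $k_1$ produced in the first stage number $\gg YL^{-2}$; so at least one of them — indeed $\gg YL^{-2}$ of them — is non-exceptional. For such a $p_1$ we obtain $n=p_1+p_2+p_3$ with $p_1+2=P_2$, $p_2+2=P_3$, $p_i\in[X_i,X_i+Y]$ for $i=1,2$, and $p_3=n-p_1-p_2$ a prime, which is the assertion.

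The argument is otherwise routine; the one point that needs genuine care is the compatibility of the interval, size and congruence constraints of Corollary \ref{cor:2} with those of Wu's theorem for every admissible $n$ — in particular, checking that when $n\equiv 3\:(6)$ Wu's lower bound for the number of Chen primes survives the extra restriction $p_1\equiv 5\:(6)$ (there is no local obstruction, since $p_1\equiv 5\:(6)$ forces only $p_1+2\equiv 1\:(6)$). Once that is in hand, the fact that $YL^{-2}$ dominates the permissible exceptional set as soon as $A>2$ closes the proof.
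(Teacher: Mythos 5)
Your argument is essentially the paper's: use Wu's short-interval Chen-prime theorem to produce $\gg YL^{-2}$ candidates $p_1\in[X_1,X_1+Y]$ with $p_1+2=P_2$, then observe that since the exceptional set in Corollary~\ref{cor:2} has size ${\rm O}(YL^{-A})$ with $A>2$, at least one $n-p_1$ is non-exceptional, and check that the hypothesis $X_1+X_2+2Y\leq n\ll Y^{3/2-\eps}$ makes Corollary~\ref{cor:2} applicable to the shifted interval. You are more careful than the paper's terse proof on one point worth having on record: you verify explicitly that the hypothesis $n\not\equiv1\:(6)$ is exactly what is needed to arrange $n-p_1\not\equiv2\:(6)$ (restricting to $p_1\equiv5\:(6)$ when $n\equiv3\:(6)$, with no local obstruction and no loss of order of magnitude in Wu's count), a compatibility the paper leaves implicit.
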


This corollary cannot be deduced from Corollary \ref{cor:1}
before since the almost-prime conditions on $p_{1}+2$ and 
$p_{2}+2$ in Corollary \ref{cor:3} are stronger. And also not
vice versa since the short interval condition in Corollary
\ref{cor:1} is stronger due to $0.971>0.933$.

\zeile
Now we state the results of the second approach, it leads 
to theorems of a similar kind. In this approach, we use
an adaption of the theorem of Perelli and Pintz in \cite{PePi}.

For the ternary Goldbach problem with one prime in an arithmetic progression
and two primes in given short intervals of different length, we show
in Section~\ref{seceinAP} the following result.

\begin{theorem}
\label{th:2}
Let $n$ denote a large positive odd integer, let
$X_{1}\geq Y_{1}\gg X_{1}^{3/5+\varepsilon_1}$, let $X_{2}\geq Y_{2}\gg
X_{2}^{7/12+\varepsilon_{2}}$. Assume that $Y_{2}^{1/3+\eps}\ll Y_{1}\ll Y_{2} 
\asymp n-X_1-Y_{1}-X_2-Y_{2}\geq 0$ holds.
Then, for $Q\ll Y_{1}X_{1}^{-1/2}L^{-B}$
with $Q\ll Y_{1}^{3/2}(n-X_{1})^{-1/2}$ and any fixed integer $a$ 
with $a\leq n-X_{1}-Y_1$ we have
\[
   \sum_{q\leq Q}
\Big| \sum_{\substack{p_1+p_2+p_3=n\\p_i \in [X_i,X_i+Y_i],\; i=1,2\\ p_1\equiv a\;(q)}}
     \log p_1 \log p_2 \log p_3 - 
\mathfrak{T}(n,q,a)Y_{1}Y_{2}
\Big| \ll Y_{1}Y_{2}L^{-A}.
\]
Further, if $Q\ll Y_1^{1/2}$ is assumed instead of $Q\ll
Y_{1}^{3/2}(n-X_{1})^{-1/2}$, 
then $\sideset{}{^{\displaystyle *}} \max_{a(q)}$ can be
inserted after the sum over $q$. 
\end{theorem}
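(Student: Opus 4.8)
The plan is to run the circle method, organised around the short variable $p_1$ in the spirit of Perelli and Pintz. Writing $e(t)=e^{2\pi it}$, $I_i=[X_i,X_i+Y_i]$, and
\[
S_1(\alpha;q,a)=\sum_{\substack{p_1\in I_1\\ p_1\equiv a\,(q)}}(\log p_1)e(p_1\alpha),\qquad S_2(\alpha)=\sum_{p_2\in I_2}(\log p_2)e(p_2\alpha),\qquad S_3(\alpha)=\sum_{p_3\in J}(\log p_3)e(p_3\alpha),
\]
the hypothesis $Y_2\asymp n-X_1-Y_1-X_2-Y_2\geq 0$ forces $p_3=n-p_1-p_2$ into a fixed interval $J$ of length $\asymp Y_2$ with $J\subset[c_1Y_2,c_2Y_2]$; hence the inner sum in the theorem equals $\int_0^1 S_1(\alpha;q,a)S_2(\alpha)S_3(\alpha)e(-n\alpha)\,d\alpha$, where $S_3$ is a prime exponential sum over its \emph{full} range while $S_1,S_2$ are genuinely short. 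One dissects $[0,1]=\mathfrak M\cup\mathfrak m$; alternatively, and more profitably for the minor arcs, one writes the inner sum as $\sum_{p_1\in I_1,\,p_1\equiv a\,(q)}(\log p_1)G(n-p_1)$ with $G(m)=\int_0^1 S_2(\alpha)S_3(\alpha)e(-m\alpha)\,d\alpha$, so that $m=n-p_1$ ranges over an interval $\mathcal I$ of length $Y_1$, with $n-X_1\asymp X_2$.

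On $\mathfrak M$ one evaluates $\int_{\mathfrak M}S_2S_3e(-m\alpha)\,d\alpha$ by the prime number theorem for the short interval $I_2$ — this is exactly where $Y_2\gg X_2^{7/12+\eps_2}$ is needed, so that $S_2$ has its expected main term on every major arc — together with the ordinary prime number theorem for $S_3$ and the usual truncation of the singular product; this yields $G(m)=\mathfrak S_2(m)Y_2+E_{\mathfrak m}(m)+E_{\mathrm{app}}(m)$, where $\mathfrak S_2$ is the binary Goldbach series, $E_{\mathfrak m}(m)=\int_{\mathfrak m}S_2S_3e(-m\alpha)\,d\alpha$, and $\sup_m|E_{\mathrm{app}}(m)|\ll Y_2L^{-C}$ for any $C$. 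Inserting this, the contribution of $\mathfrak S_2(m)Y_2$ is, after expanding $\mathfrak S_2$ into a short sum over moduli $d\ll L^{C}$, a linear combination over $d$ of counts of primes $p_1\in I_1$ in suitable residue classes modulo $[q,d]$; by the Bombieri–Vinogradov theorem for primes in arithmetic progressions in short intervals, applicable because $[q,d]\ll Y_1X_1^{-1/2}L^{-B}$ (this is where $Y_1\gg X_1^{3/5+\eps_1}$, $Q\ll Y_1X_1^{-1/2}L^{-B}$ and, for the numerology of the truncation, $Q\ll Y_1^{3/2}(n-X_1)^{-1/2}$ enter), the sum over $q\le Q$ of the resulting error is $\ll Y_1Y_2L^{-A}$, and the remaining main term is by definition $\sum_{q\le Q}\mathfrak T(n,q,a)Y_1Y_2$. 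The term $E_{\mathrm{app}}$ is harmless, since its total contribution is $\ll\big(\sup_m|E_{\mathrm{app}}(m)|\big)\sum_{q\le Q}\sum_{p_1\in I_1,\,p_1\equiv a\,(q)}\log p_1\ll Y_2L^{-C}\cdot Y_1L^{2}\ll Y_1Y_2L^{-A}$. (For the maximal variant one invokes the maximal form of that Bombieri–Vinogradov theorem, whose range of validity tightens to $Q\ll Y_1^{1/2}$ on account of the Cauchy–Schwarz loss over the $\ll q$ residue classes.)

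It then remains to bound the minor-arc contribution $\sum_{q\le Q}\big|\int_{\mathfrak m}S_1(\alpha;q,a)S_2(\alpha)S_3(\alpha)e(-n\alpha)\,d\alpha\big|$. The trivial estimate $\sum_q\int_{\mathfrak m}|S_1S_2S_3|$ is too lossy — the main term here is only of size $Y_1Y_2/\varphi(q)$, not $Y_2^2$ — so the oscillation in $\alpha$ must be retained: Cauchy–Schwarz over $q$ reduces matters to the second moment $\sum_{q\le Q}\big|\int_{\mathfrak m}S_1(\alpha;q,a)S_2S_3e(-n\alpha)\,d\alpha\big|^2$, and opening the square turns the $q$-sum into a diagonal part $\sum_{p_1\in I_1}(\log^2 p_1)\,\#\{q\le Q:q\mid p_1-a\}\,|E_{\mathfrak m}(n-p_1)|^2$ plus off-diagonal terms governed by $\gcd(p_1-a,p_1'-a)\mid p_1-p_1'$. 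Using $\#\{q\le Q:q\mid p_1-a\}\le\tau(p_1-a)$ and $\sum_{n\le X}\tau(n-a)^2\ll XL^3$, the diagonal is controlled by the mean-square bound
\[
\sum_{m\in\mathcal I}|E_{\mathfrak m}(m)|^2=\sum_{m\in\mathcal I}\Big|\int_{\mathfrak m}S_2(\alpha)S_3(\alpha)e(-m\alpha)\,d\alpha\Big|^2\ \ll\ Y_1Y_2^2L^{-A}.
\]
Since $\mathcal I$ has length $Y_1$, genuinely shorter than the scale $Y_2$ of $S_2S_3$, one cannot simply invoke Parseval but passes through Gallagher's lemma to the minor-arc $L^2$-norm $\int_{\mathfrak m}|S_2S_3|^2\,d\alpha$ at a cost controlled by $Y_1$, and it is precisely here that $Y_2^{1/3+\eps}\ll Y_1$ makes the trade-off admissible. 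That $L^2$-norm is estimated by splitting one factor in the sup-norm: with the minor arcs defined so that Dirichlet denominators exceed a small power of $Y_2$ (and the major arcs correspondingly enlarged, which is affordable by the same short-interval Bombieri–Vinogradov input), Vinogradov's estimate gives a genuine power saving for the full-range sum $S_3$, while $\int_0^1|S_2|^2\ll Y_2L$; alternatively one uses $\sup_{\mathfrak m}|S_2|\ll Y_2L^{-A}$ from the $7/12$-treatment of $I_2$ against $\int_0^1|S_3|^2\ll Y_2L$. The off-diagonal terms are disposed of by the same ideas together with the coprimality constraint, and $Q\ll Y_1^{3/2}(n-X_1)^{-1/2}$, i.e.\ $Q\ll Y_1^{3/2}X_2^{-1/2}$, is exactly what makes the final count close.

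I expect this minor-arc second-moment estimate to be the decisive and most delicate step — in particular, getting the Gallagher trade-off to work under only $Y_1\gg Y_2^{1/3+\eps}$, and choosing the major/minor cut so that $S_3$ genuinely saves a power of $Y_2$ while the short-interval Bombieri–Vinogradov inputs still apply on the enlarged major arcs. By contrast, the major-arc evaluation and the extraction of $\mathfrak T(n,q,a)$ are routine once the cited short-interval Bombieri–Vinogradov theorem is available, and the passage to the maximal variant costs only the expected tightening of the admissible range of $Q$ to $Q\ll Y_1^{1/2}$.
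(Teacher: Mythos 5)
Your overall plan runs parallel to the paper's: split $G(n-p_1)$ into a truncated singular-series main term, a negligible major-arc approximation error, and a minor-arc tail $E_{\mathfrak m}(n-p_1)$; handle the main term via the short-interval Bombieri--Vinogradov theorem (Theorem~\ref{pepisa}) for $p_1$ in progressions modulo $[q;d]$, $d\le L^C$; and control the minor-arc tail by a second-moment bound, which is essentially Theorem~\ref{th:GBshortIV}. The paper does not reprove that mean square --- it cites the adaptation of Perelli--Pintz as a tool --- so your sketch of re-deriving it via Gallagher and Vinogradov is extra work but not itself a gap.

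The genuine gap is in how you pass from the sum over $q$ to the mean square. You propose ``Cauchy--Schwarz over $q$'', costing a factor $Q^{1/2}$, followed by opening the square, which produces a diagonal weighted by $\#\{q\le Q: q\mid p_1-a\}$ and off-diagonal terms weighted by $\#\{q\le Q: q\mid(p_1-a,\,p_1'-a)\}$. The diagonal can be tamed, but the off-diagonal does not recover the $Q^{1/2}$: bounding the divisor counts and summing over $p_1'$, one obtains only
\[
\sum_{q\le Q}\Bigl|\sum_{\substack{p_1\in I_1\\p_1\equiv a\,(q)}}\log p_1\,E_{\mathfrak m}(n-p_1)\Bigr|^2 \ \ll\ Y_1\,L^{O(1)}\sum_{m\in\mathcal I}|E_{\mathfrak m}(m)|^2 \ \ll\ Y_1^2Y_2^2L^{-A+O(1)},
\]
so Cauchy--Schwarz yields $\ll Q^{1/2}Y_1Y_2L^{-A/2+O(1)}$, which fails to be $\ll Y_1Y_2L^{-A'}$ once $Q$ is a fixed positive power of $Y_1$ --- and the theorem does allow such $Q$, e.g.\ $Q\asymp Y_1X_1^{-1/2}L^{-B}=X_1^{\theta_1-1/2}L^{-B}$ with $\theta_1>1/2$. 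What is needed instead is the Hal\'asz--Montgomery inequality of Lemma~\ref{lHM1}, $\sum_{q\sim Q}|\langle v,\ph_q\rangle|\le\|v\|\bigl(\sum_{q_1,q_2\sim Q}|\langle\ph_{q_1},\ph_{q_2}\rangle|\bigr)^{1/2}$, whose overlap sum over the interval of length $Y_1$ near $n-X_1$ is $\ll Y_1L^3+Q^{2/3}(n-X_1)^{1/3}L^3$ by Voronoi's divisor estimate; this replaces your $Q^{1/2}$ prefactor by $(Y_1+Q^{2/3}(n-X_1)^{1/3})^{1/2}L^{3/2}$, and the hypothesis $Q\ll Y_1^{3/2}(n-X_1)^{-1/2}$ is exactly what keeps this $\ll Y_1^{1/2}L^{3/2}$. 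In the paper this is carried out in Theorem~\ref{th:binGBoneprime} for general even $2k_1$ in the interval, after which Theorem~\ref{th:2} follows by restricting $2k_1=n-p_1$ and inserting the bounded weight $\log p_1$; the $\max_a$ variant uses Lemma~\ref{lHM2} in the range $Q\ll Y_1^{1/2}$. Replacing your Cauchy--Schwarz-and-open-the-square step by this duality is the missing ingredient.
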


The condition $n-X_1-Y_{1}-X_2-Y_{2}\asymp Y_{2}$ is rather restrictive:
If $n,X_1,Y_{1}$ are given,
one has to choose $X_2,Y_{2}$ then appropriately, but still 
feasible in such a way that
$Y_{1}$ and $Y_{2}$ may be of different magnitude. 
The theorem gives a sharp estimate then.

Here $\mathfrak{T}(n,q,a)$ denotes the ternary singular series with
one prime in an arithmetic progression, namely
\[
    \mathfrak{T}(n,q,a) := \frac{1}{\ph(q)}
    \prod_{\substack{p\mid n, p\nmid q \\\text{or } p\nmid n-a,p\mid q}}
    \Big(1-\frac{1}{(p-1)^2}\Big) \prod_{p\nmid nq} \Big( 1+\frac{1}{(p-1)^3} \Big)
    \prod_{p\mid n-a, p\mid q} \frac{p}{p-1}.
\]

From Theorem \ref{th:2} we deduce:
Every large odd $n$ can be written as $n=p_1+p_2+p_3$ with two primes in
short intervals of different length, one of which lying in an
arithmetic progression $a$ modulo $q$ for almost all admissible 
moduli $q\leq Q$, where $Q\ll Y_{1}X_{1}^{-1/2}L^{-B}$
and $Q\ll Y_{1}^{3/2}(n-X_{1})^{-1/2}$.

A last corollary with almost-twin primes in short intervals
of different interval lengths can be deduced from Theorem
\ref{th:2} above, again using sieve methods:

\begin{corollary}
  \label{cor:4}
 Let $X_1^{\theta_1}=Y_{1}$ be large, 
 let $X_{2}^{\theta_{2}}=Y_{2}$ with $\theta_2> 3/5$, and let 
 $Y_{2}^{\eta}\ll Y_{1} \ll Y_{2}$.
  Let $n$ be an odd integer $n\not\equiv 1\:(6)$
  with $Y_{2}\asymp n-X_1-X_2-Y_{1}-Y_{2}>0$. Then
  the equation $n=p_1+p_2+p_3$ is solvable in primes
  $p_{1},p_{2},p_{3}$ such that $p_{1}+2=P_{3}$ and 
  $p_{i}\in [X_{i},X_{i}+Y_{i}]$, $i=1,2$, if 
  \vspace{-1ex}
  \begin{itemize}
  \itemsep0pt
  \item either $1\geq\theta_{1}\geq 0.861$, 
    $1\geq\eta>(1+1/\theta_{1})^{-1}> 0.462$
  \item or $0.5\geq \eta\geq 0.463$,
    $1\geq\theta_{1}>1/\min(1/\eta-1,(3-1/\eta)\Lambda_{3}/2)> 0.782$,
    where $\Lambda_{3}:=4-\log(27/7)/\log 3$.
  \end{itemize}
\end{corollary}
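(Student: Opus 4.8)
To prove Corollary~\ref{cor:4}, the plan is to run a weighted one‑dimensional sieve on the sequence
\[
  \mathcal{A}=\Big\{\,p_1+2 \;:\; p_1+p_2+p_3=n,\ p_i\in[X_i,X_i+Y_i],\ i=1,2\,\Big\},
\]
each value $m=p_1+2$ being counted with the weight $\sum\log p_1\log p_2\log p_3$, the sum running over the representations of $n$ producing that value of $p_1$. For squarefree $d$ the divisibility $d\mid m$ is precisely the congruence $p_1\equiv-2\:(d)$, and since $-2\le n-X_1-Y_1$ for $n$ large, Theorem~\ref{th:2} with $a=-2$ and $q=d$ running up to $Q$ gives
\[
  \sum_{d\le Q}\Big|\,|\mathcal{A}_d|-\mathfrak{T}(n,d,-2)\,Y_1Y_2\,\Big|\ll Y_1Y_2L^{-A}
\]
whenever $Q\ll\min\big(Y_1X_1^{-1/2}L^{-B},\,Y_1^{3/2}(n-X_1)^{-1/2}\big)$. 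Writing $\mathfrak{T}(n,d,-2)=\mathfrak{S}(n)\,g(d)$ one reads off from the Euler product that $g$ is multiplicative with $g(p)=(p-1)^{-1}(1+O(1/p))$, so $\mathcal{A}$ is a linear ($\kappa=1$) sieve problem with level of distribution $D=Q$ and main term $X\asymp\mathfrak{S}(n)\,Y_1Y_2$, the $\log$‑weights being removed by partial summation over $p_2,p_3$ and over $p_1$ in its interval. The hypothesis $n\not\equiv1\:(6)$ is exactly what yields $\mathfrak{S}(n)\gg1$, i.e.\ it prevents the local factor at $3$ of the combined singular series from vanishing.

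To $\mathcal{A}$ I would then apply a weighted linear sieve of Richert type (as in the standard texts on sieve methods, or in the shape used by Meng~\cite{Meng} and Wu~\cite{Wu2}). Since every element of $\mathcal{A}$ has size $\asymp X_1$, such a sieve produces a prime $p_1\in[X_1,X_1+Y_1]$ with $p_1+2=P_3$ and $n-p_1=p_2+p_3$, $p_2\in[X_2,X_2+Y_2]$, as soon as the level of distribution may be taken as large as $X_1^{1/\Lambda_3+\eps}$, where $\Lambda_3=4-\log(27/7)/\log3$ is precisely the threshold exponent for producing a $P_3$ in the one‑dimensional weighted sieve (the naive lower‑bound sieve would only reach the exponent $2/3$, far too large here). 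Thus everything reduces to choosing $Q$ with
\[
  X_1^{1/\Lambda_3+\eps}\le Q\le\min\big(Y_1X_1^{-1/2}L^{-B},\,Y_1^{3/2}(n-X_1)^{-1/2}\big),
\]
and this becomes an elementary exercise in exponents once one uses $Y_1=X_1^{\theta_1}$, $Y_2=X_2^{\theta_2}$, the relation $Y_2\asymp n-X_1-X_2-Y_1-Y_2$ and $Y_2^{\eta}\ll Y_1\ll Y_2$ to express $X_2$, $Y_1$, $Y_2$ in terms of $X_1$ and of $\theta_1,\theta_2,\eta$; the worst case is $Y_1\asymp Y_2^{\eta}$.

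The two alternatives in the statement correspond to which of the two upper bounds on $Q$ is the effective one. When the Bombieri--Vinogradov-type bound $Y_1X_1^{-1/2}L^{-B}=X_1^{\theta_1-1/2-o(1)}$ is the smaller, an admissible $Q$ exists exactly when $\theta_1-1/2\ge1/\Lambda_3$, i.e.\ $\theta_1\ge1/2+1/\Lambda_3=0.861\ldots$, and the hypothesis $\eta>(1+1/\theta_1)^{-1}$ is then what keeps $n-X_1$ small enough that the secondary bound $Y_1^{3/2}(n-X_1)^{-1/2}$ is not the obstruction; this is the first case. In the complementary regime $\eta\le1/2$, where the secondary bound binds, imposing $Y_1^{3/2}(n-X_1)^{-1/2}\gg X_1^{1/\Lambda_3}$ and substituting the parameter relations above turns the requirement into $\theta_1>1/\min\big(1/\eta-1,(3-1/\eta)\Lambda_3/2\big)$, the two entries of the minimum recording, respectively, the demand $Q\le Y_1^{3/2}(n-X_1)^{-1/2}$ and the demand that the sieved integers $p_1+2\asymp X_1$ not be too large against the available level; this is the second case. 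I expect the genuine difficulty to be the sieve step: extracting $P_3$ rather than a weaker $P_r$ forces the sharp weighted linear sieve, and one must check that the remainder bound of Theorem~\ref{th:2} is uniform enough --- over all $d\le Q$ simultaneously, and after the partial summation removing the three logarithms --- to serve as its level‑of‑distribution input, and that $\mathfrak{S}(n)$ is bounded away from $0$ uniformly over the admissible $n$. The remainder is routine bookkeeping of exponents.
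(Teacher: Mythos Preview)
Your proposal is correct and follows essentially the same route as the paper: sieve the sequence $\mathcal{A}=\{p_1+2\}$ (the paper's ``$p_2+2$'' is a typo) using Theorem~\ref{th:2} as the level-of-distribution input and Richert's weighted linear sieve (Theorem~9.3 of \cite{HR}), then split into two cases according to which of the two bounds on $Q$ in Theorem~\ref{th:2} is the effective one. Your threshold $Q\ge X_1^{1/\Lambda_3+\eps}$ is exactly the Halberstam--Richert condition $|a|\le\xi^{\alpha(\Lambda_3-\delta)}$ rewritten, and your case analysis (with boundary $\eta=(1+1/\theta_1)^{-1}$ and second-case constraint $1/\theta_1<\min(1/\eta-1,(3-1/\eta)\Lambda_3/2)$) coincides with the paper's.
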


\subsection{A conjectured unification of the results}

Now we ask what would be the strongest version of a theorem
that combines both classes of results.
All theorems above are deduced either by the Kawada-approach
or by the Perelli-Pintz-approach. It would be interesting if
there exists a slightly stronger theorem that would incorporate all
such results. Such a unification, which seems to be unreachable by
current methods, can be stated 
in the following way: 

\begin{conjecture}
There exist absolute constants $0<\theta,\theta_{1},\theta_{2}<1$ such that for
$X_{1}\geq R\gg X_{1}^{\theta_{1}}$, $X_{2}\geq Y\gg
X_{2}^{\theta_{2}}$ and $R\ll Y^{\theta}$,
the estimate
\[
  \sum_{\substack{r\in [X_{1},X_1+R]\\2\mid r}} \sum_{q\leq Q} \:
   \sideset{}{^{\displaystyle *}}\max_{\substack{a \:(q) \\ (a-r,q)=1}}
   \Big| \sum_{\substack{p_2\in [X_{2},X_2+Y] \\ p_2\equiv a\;(q) \\ p_2+p_3=r }} 
    \log p_2\log p_3 - \mathfrak{S}(r,q,a)Y \Big|\ll \frac{RY}{L^A}
\]
holds for all $Q\ll YX_{2}^{-\hb} L^{-B}$.
Here the numbers $r$ and $p_{2}$ are chosen from
short intervals of length $R$ respectively $Y$. 
In this statement, the Goldbach equation $p_2+p_3=r$ may be replaced
by the twin equation $p_2-p_3=r$.
\end{conjecture}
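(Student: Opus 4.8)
The natural line of attack is the Hardy--Littlewood circle method fed by the Bombieri--Vinogradov machinery. Write $e(\alpha):=e^{2\pi i\alpha}$ and set $I_2:=[X_2,X_2+Y]$. Substituting $p_3=r-p_2$ (resp.\ $p_3=p_2-r$ in the twin case), the inner sum over $p_2,p_3$ equals, for each fixed $r,q,a$,
\[
   \int_0^1 S(\alpha;q,a)\,P_3(\alpha)\,e(-r\alpha)\,d\alpha ,
\]
where $S(\alpha;q,a):=\sum_{p_2\in I_2,\,p_2\equiv a\,(q)}\log p_2\,e(p_2\alpha)$ and $P_3(\alpha):=\sum_{p_3\in I_3}\log p_3\,e(p_3\alpha)$ runs over the \emph{fixed} interval $I_3:=[X_1-X_2-Y,\,X_1+R-X_2]$, which has length $R+Y\asymp Y$ by $R\ll Y^\theta$. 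The maximum over $a$ is then removed in the classical fashion: writing $\mathbf 1[p_2\equiv a\,(q)]=\ph(q)^{-1}\sum_{\chi\bmod q}\bar\chi(a)\chi(p_2)$ for $(p_2,q)=1$ (the terms with $p_2\mid q$ being negligible after summation over $r,q$) and matching the main term $\mathfrak S(r,q,a)Y$ with the principal-character contribution, one is reduced to showing
\[
   \sum_{\substack{r\in[X_1,X_1+R]\\2\mid r}}\;\sum_{d\le Q}\frac{1}{\ph(d)}
   \sideset{}{^{*}}\sum_{\chi\bmod d}
   \Big|\,\int_0^1 S_\chi(\alpha)\,P_3(\alpha)\,e(-r\alpha)\,d\alpha-M(r,\chi)\Big|\ll RY\,L^{-A-2},
\]
where $S_\chi(\alpha):=\sum_{p_2\in I_2}\chi(p_2)\log p_2\,e(p_2\alpha)$, the star denotes summation over primitive characters, and $M(r,\chi)$ is the expected main term; it carries the full singular series for $\chi=\chi_0$ and vanishes for $\chi\ne\chi_0$ under the usual exclusion of Siegel zeros. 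The principal character $\chi_0$ contributes precisely the short-interval binary Goldbach (or twin) count attached to $r$, averaged over $r$ in the short interval $[X_1,X_1+R]$; this piece is treatable by the Perelli--Pintz method, i.e.\ by an $L^2$-average over $r$ via Gallagher's lemma fed by zero-density estimates for $\zeta(s)$, and it is here that the conditions $\theta_1,\theta_2$ close to $1$ and $R\ll Y^\theta$ enter, ensuring that this mean square is $\ll RY^2L^{-C}$.

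For the non-principal characters I would treat each pair $(d,\chi)$ by a Farey dissection of $[0,1]$ into major arcs $\mathfrak M$ and minor arcs $\mathfrak m$. On $\mathfrak M$ one replaces $S_\chi(\alpha)$ by its expected shape via the explicit formula for $\psi(x;\chi)$ in short intervals and estimates the resulting triple sum over $d,\chi,r$ by combining the large sieve for Dirichlet characters with zero-density bounds for $L(s,\chi)$, exactly as in the proof of the Bombieri--Vinogradov theorem, but now carrying the extra short-interval weight on $p_2$ and the extra average over $r$. On $\mathfrak m$, after inserting unimodular factors and applying Cauchy--Schwarz over the triple $(r,d,\chi)$ with weight $\ph(d)^{-1}$, everything comes down to the hybrid mean value estimate
\[
   \sum_{d\le Q}\frac{1}{\ph(d)}\sideset{}{^{*}}\sum_{\chi\bmod d}
   \sum_{\substack{r\in[X_1,X_1+R]\\2\mid r}}
   \Big|\int_{\mathfrak m}S_\chi(\alpha)\,P_3(\alpha)\,e(-r\alpha)\,d\alpha\Big|^{2}\ll \frac{RY^{2}}{Q}\,L^{-C}
\]
for every fixed $C>0$, uniformly for $Q\le YX_2^{-\hb}L^{-B}$; the twin case is identical after replacing $P_3$ by a conjugated prime sum over the corresponding interval and $\mathfrak S$ by the twin singular series, and the analytic bottleneck is the same.

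\textbf{The main obstacle.} The displayed hybrid mean value is exactly where the argument stalls. It demands \emph{simultaneously} (i) square-root--type cancellation in the exponential sum over primes $p_2$ confined to an interval of length $Y$ inside $[X_2,2X_2]$, which on the minor arcs is only accessible through short-interval prime exponential sum bounds and already forces $\theta_2$ near $1$; (ii) Bombieri--Vinogradov--strength cancellation over the moduli, i.e.\ over the primitive characters with $d\le YX_2^{-\hb}$; and (iii) Perelli--Pintz--strength savings from the extra average of $r$ over the short interval $[X_1,X_1+R]$. Any two of these three inputs can be combined with present-day technology --- this is, in effect, what lies behind Theorems~\ref{th:1} and \ref{th:2}, proved via the Kawada and the Perelli--Pintz approaches respectively --- but to combine all three one would need, for instance, a large-sieve inequality for character sums twisted by $e(r\alpha)$ with $r$ restricted to a short interval that still retains the full Bombieri--Vinogradov level $YX_2^{-\hb}$, together with a correspondingly strong zero-density input; no such estimate is presently available. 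This is why the statement is offered only as a conjecture.
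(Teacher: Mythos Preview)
The statement is a \emph{conjecture}, and the paper does not prove it; it says explicitly that this unification ``seems to be unreachable by current methods'' and that ``the problem to prove the conjecture lies in the minor arc contribution.'' Your write-up is not a proof either --- it is a heuristic outline ending with the identification of the obstruction --- so in that sense you and the paper are in agreement: there is no proof to compare against.

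The paper's own discussion is terse. It remarks only that the major-arc contribution can be handled by standard methods (with the explicit admissible ranges $R\ll Y$, $Y\gg X_1^{7/12+\eps_1}$, $Y\gg X_2^{3/5+\eps_2}$, $X_1-X_2\ge Y$), and that the minor arcs are the obstacle. It then frames the difficulty structurally: Theorems~\ref{th:1} and~\ref{th:2} each combine a proper subset of the conditions (AP-condition, short interval on $p_2$, short interval on $r$), but the full conjecture would feed into a ternary result carrying ``two AP-conditions and two short-interval-conditions,'' which is beyond reach.

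Your analysis fleshes this out considerably and is consistent with the paper's remarks: you set up the circle method, split off the principal-character piece (handled by the Perelli--Pintz mechanism), and isolate the obstruction as a hybrid minor-arc mean value demanding simultaneously short-interval exponential-sum savings in $p_2$, Bombieri--Vinogradov level in the moduli, and the short-interval average over $r$. Your observation that ``any two of these three inputs can be combined with present-day technology'' is precisely the moral the paper draws from Theorems~\ref{th:1} and~\ref{th:2}. So there is no disagreement; your discussion simply expands in detail what the paper summarizes in two sentences, and correctly refrains from claiming a proof.
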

We note that a version of
the conjecture with $Y\ll R^{\theta}$ would also be desirable, 
but this seems to be also hard.

Here the singular series $\mathfrak{S}(r,q,a)$ is the expression
\begin{equation}
\label{eq:sing1}
   \mathfrak{S}(r,q,a):=\begin{cases} \frac{1}{\varphi(q)}
     \mathfrak{S}(rq), & \text{if } 2\mid r, \ (a,q)=(a-r,q)=1,\\
    0, &\text{otherwise}, \end{cases}
\end{equation}
with
\begin{equation}
\label{eq:sing2}
   \mathfrak{S}(r):=\begin{cases}2\prod_{p\neq 2}
     (1-\frac{1}{(p-1)^{2}})
     \prod_{\substack{p \mid r\\p\neq 2}} \frac{p-1}{p-2},
     &\text{if }2\mid r,\\
    0, &\text{otherwise}.\end{cases}
\end{equation}
Given as a series, it can be written as
\[
  \mathfrak{S}(r,q,a)= \sum_{s\geq 1} H_{s}(r,q,a)
\] with
\begin{equation}
  \label{eq:sing3}   
H_{s}(r,q,a):=
  \frac{\mu(s)}{\varphi(s)\varphi([q;s])} \sideset{}{^{\displaystyle *}}\sum_{b(s)}
  e\Big(\frac{-rb}{s}\Big) \sideset{}{^{\displaystyle
      *}}\sum_{\substack{c(s)\\(q,s)\mid c-a}} 
  e\Big(\frac{bc}{s}\Big),
\end{equation}
see e.g.\ \cite[eq.~(33)]{BCD}. Further, we have
\begin{equation}
\label{eq:sing4}
\mathfrak{S}(r)=\sum_{s\geq 1}  \frac{\mu^2(s)}{\varphi^2(s)}
\sideset{}{^{\displaystyle *}}\sum_{b(s)}e\Big(\frac{-rb}{s}\Big).
\end{equation}

A number of theorems proved by several authors can be seen as 
special cases of this conjecture. Mikawa \cite{Mik} considered
the case for $R\asymp Y\asymp X_{1}\asymp X_{2}$, and 
Laporta \cite{Lap} the one for $Y\asymp R$ and short intervals 
for $r$. 
Meng \cite{Meng},\cite{Meng2}
considered the Goldbach variant for $R\asymp Y$
and non-short intervals. Kawada's estimate \cite{Kaw} for the
special case $k=2$, $a_0=1$, $b_0=0$, $a_1=\pm 1$, $b_1=r$
is contained in the conjecture with $R\asymp Y$, 
with short intervals for both $p_2$ and $r$.
Perelli's and Pintz's result \cite{PePi} is contained in the conjecture 
when taking no primes in arithmetic progressions, but such that $r$ 
lies in a short interval.

By now, the paper \cite{BCD} of A.\ Balog, A.\ Cojocaru and C.\ David
contains an interesting result of this kind
which can be seen closest to the conjecture, namely, that
the Barban/Davenport/Halberstam-variant of the conjecture is 
true, that means, the
corresponding estimate when max over $a$ is replaced by sum over $a$. 
It suggests that also the 
conjecture, which is a sharper estimate of Bombieri-Vinogradov-type,
could be true. It would be the next step for reaching
stronger results in this area.

We remark that the major arc contribution of the conjecture
can be shown with known standard methods. 
We obtain that $R\ll Y$ with $Y\gg X_{1}^{7/12+\eps_{1}}$, $Y\gg
X_{2}^{3/5+\eps_{2}}$, $X_{1}-X_{2}\geq Y$ suffices.  
The problem to prove the conjecture lies in the minor arc contribution.

Theorems \ref{th:2} and \ref{th:1}
show that we can put one AP-condition
and two short-interval-conditions or two AP-conditions and
one short-interval-condition (two with the same length are counted
as one) on the primes when treating the ternary
Goldbach problem. But stronger versions seem to be hard.
The stated conjecture would lead to a version with two AP-conditions
and two short-interval-conditions.

We would like to mention that a mean value theorem 
as Theorem \ref{th:1}
could also hold with all three primes
in arithmetic progressions with large moduli.
But this problem is of similar kind and seems to be unreachable, 
too. See also \cite{Tol}, \cite{Tol2},\cite{Tol3} and \cite{HalAMH} 
for discussions of this conjecture.

\section{Tools}
\label{HalMont}

\zeile 
As a corollary of the known inequalities of Halasz and Montgomery, 
see Satz 7.3.1 \cite{Brue}, we first state the following lemma.
\begin{lemma}
  \label{lHM1} Consider a fixed integer $a$.
 For a real bound $Q\geq 1$, positive integers $M,N$ with $Q\leq M$, $N\leq M$,
 $M\geq a$ and $v_{M+1},\dots,v_{M+N}\in\C$ we have
\begin{equation}
\label{eq:lHM1}
   \sum_{q\sim Q} \Big|\sum_{\substack{n\in [M,M+N] \\n\equiv a\;(q)}}
   v_{n}\Big| \ll\; (N+Q^{2/3}M^{1/3})^{1/2}(\log (M+1))^{3/2} \Big(\sum_{n\in [M,M+N]}
   |v_{n}|^{2}\Big)^{1/2}.
\end{equation}
\end{lemma}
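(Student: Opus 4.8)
The plan is to strip off the absolute values by duality and then apply the Halász--Montgomery inequality (Satz~7.3.1 of \cite{Brue}) with the indicator vectors of the residue classes $a\,(q)$, which reduces \eqref{eq:lHM1} to a divisor-type mean value in a short interval. First I would pick unimodular numbers $\xi_{q}$ realising the absolute values, so that the left side of \eqref{eq:lHM1} equals $\sum_{q\sim Q}\xi_{q}\langle\phi,\psi_{q}\rangle$ with $\phi:=(v_{n})_{M<n\le M+N}$ and $\psi_{q}:=(\mathbf 1_{n\equiv a\,(q)})_{M<n\le M+N}$. The Halász--Montgomery inequality (duality followed by a bound for the Gram matrix of the $\psi_{q}$) gives
\[
   \sum_{q\sim Q}\bigl|\langle\phi,\psi_{q}\rangle\bigr|
   \le\Bigl(\sum_{n}|v_{n}|^{2}\Bigr)^{1/2}
   \Bigl(\sum_{q_{1},q_{2}\sim Q}\langle\psi_{q_{1}},\psi_{q_{2}}\rangle\Bigr)^{1/2},
\]
and since $\langle\psi_{q_{1}},\psi_{q_{2}}\rangle=\#\{M<n\le M+N:\,[q_{1};q_{2}]\mid n-a\}$, summing first over $n$ yields
\[
   \sum_{q_{1},q_{2}\sim Q}\langle\psi_{q_{1}},\psi_{q_{2}}\rangle
   =\sum_{m\in I}g(m)^{2},\qquad g(m):=\#\{q\sim Q:\,q\mid m\},
\]
where $I:=(M-a,M+N-a]$ is an interval of length $N$ consisting of integers $\ll M$. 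So everything reduces to showing $\sum_{m\in I}g(m)^{2}\ll(N+Q^{2/3}M^{1/3})(\log(M+1))^{3}$.

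To attack this I would write $g(m)^{2}=\sum_{q_{1},q_{2}\sim Q}\mathbf 1_{[q_{1};q_{2}]\mid m}$ and sum over $m\in I$. This produces the main term $N\sum_{q_{1},q_{2}\sim Q}[q_{1};q_{2}]^{-1}\ll N\log Q$ (elementary: writing $[q_1;q_2]^{-1}=1/(d e_1 e_2)$ with $d=(q_1,q_2)$, $q_i=de_i$, it is $\ll\sum_{d\le 2Q}d^{-1}\ll\log Q$ since $\sum_{e\sim Q/d}e^{-1}\ll1$), together with a remainder
\[
   \mathcal R:=\sum_{q_{1},q_{2}\sim Q}\Bigl(\Bigl\{\tfrac{M-a}{[q_{1};q_{2}]}\Bigr\}-\Bigl\{\tfrac{M+N-a}{[q_{1};q_{2}]}\Bigr\}\Bigr).
\]
Trivially $\mathcal R\ll\#\{(q_{1},q_{2})\sim Q:\,[q_{1};q_{2}]\le 2M\}\ll Q^{2}$, and this is already admissible whenever $Q\le M^{1/4}$, because then $Q^{2}\le Q^{2/3}M^{1/3}$. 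So the whole difficulty is concentrated in the range $M^{1/4}<Q\le M$.

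In that range the count of pairs with $[q_{1};q_{2}]\le 2M$ is only $\ll M$, far larger than $Q^{2/3}M^{1/3}$, so the trivial bound for $\mathcal R$ fails and one must exhibit genuine cancellation. The point is that $\mathcal R$ is a Dirichlet-divisor-type remainder over a short interval of length $N$ around $M$, with the divisors restricted to the dyadic window $[Q,2Q)$; the expected bound $\mathcal R\ll Q^{2/3}M^{1/3}(\log(M+1))^{3}$ is of Voronoi/van der Corput strength, the exponent $\tfrac13$ being the Dirichlet divisor exponent and the factor $Q^{2/3}$ reflecting the location of the divisors. Producing this cancellation --- by a dyadic splitting of $(q_{1},q_{2})$ into $(q_1,q_2)=d$, $q_i=de_i$, fixing $d$, and feeding the resulting exponential sum in the $e_i$ (essentially $\sum_{e_1,e_2}e(\,\cdot/(de_1e_2))$) into Satz~7.3.1, while checking that the logarithmic losses never exceed $(\log(M+1))^{3/2}$ --- is the main obstacle; everything else (the two Cauchy--Schwarz steps, the main term, and the harmless principal frequency $[q_1;q_2]=1$) is routine, and with the estimate for $\mathcal R$ in hand \eqref{eq:lHM1} follows at once.
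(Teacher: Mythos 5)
Your first step is the same as the paper's: Hal\'asz--Montgomery reduces \eqref{eq:lHM1} to bounding $\sum_{m\in I}\tau_Q(m)^2$, where $\tau_Q(m):=\#\{q\sim Q: q\mid m\}$ (your $g$) and $I=(M-a,M+N-a]$. After that you diverge: you expand the square as a double sum over $q_1,q_2\sim Q$, extract the main term $N\sum[q_1;q_2]^{-1}\ll N\log Q$, and are left with a remainder $\mathcal R$ of fractional parts which you bound by $Q^2$. You correctly observe that $Q^2\le Q^{2/3}M^{1/3}$ only when $Q\le M^{1/4}$, and that for $M^{1/4}<Q\le M$ one must prove $\mathcal R\ll Q^{2/3}M^{1/3}(\log M)^3$, which you describe but do not prove. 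That unproved estimate is the entire content of the lemma in the large-$Q$ range, and it is not a routine van der Corput exercise: after fixing $d=(q_1,q_2)$ and writing $q_i=de_i$ one faces incomplete Kloosterman-type bilinear exponential sums over coprime $(e_1,e_2)$, and making them uniform in $d$ with only a $(\log M)^{3}$ loss is genuinely delicate. So as written the proposal has a real gap.

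The paper sidesteps the bilinear difficulty entirely with an elementary manoeuvre that you should note: rather than squaring $\tau_Q$ into a double sum over $(q_1,q_2)$, it writes
\[
  \tau_Q(s)^2=\tau_Q(s)\sum_{\substack{s=uv\\u\sim Q}}1\ \le\ \sum_{\substack{s=uv\\u\sim Q}}\tau_Q(uv)\ \le\ \sum_{\substack{s=uv\\u\sim Q}}\tau(u)\tau(v),
\]
then sums over $s\in I$ and interchanges, so that for each fixed $u\sim Q$ one has a \emph{one-variable} divisor sum $\sum_{v\in[(M-a)/u,(M+N-a)/u]}\tau(v)$, to which Vorono\"i's asymptotic $\sum_{n\le x}\tau(n)=x\log x+(2\gamma-1)x+O(x^{1/3}\log x)$ applies directly, giving $\ll\frac{N}{u}\log M+\frac{M^{1/3}}{u^{1/3}}\log M$. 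Summing $\tau(u)$ against this over $u\sim Q$ produces exactly $N(\log M)^3+Q^{2/3}M^{1/3}(\log M)^3$. Thus the $M^{1/3}$ saving is inherited for free from the classical one-variable divisor problem, and no cancellation over pairs $(q_1,q_2)$ needs to be exhibited. To repair your proof, either carry out the exponential-sum estimate you sketch, or replace your bilinear expansion by this linearisation and invoke Vorono\"i.
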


\begin{proof}
Write $v:=(v_{M+1},\dots,v_{M+N})^{T}\in\C^{N}$ and consider the usual
scalar product $\langle v,w\rangle:=\sum_{n\sim N}v_{n}\overline{w_{n}}$ on
$\mathbb{C}^{N}$, where $v,w\in\C^{N}$. For every $q\sim Q$ let
\[
   \ph_{q}(n):=\begin{cases} 1, &\text{ if } n\equiv a \;(q) \\
                             0, &\text{ else,} \end{cases}
\]
so that $\ph_{q}\in\C^{N}$.

The left hand side of \eqref{eq:lHM1} then becomes $\sum_{q\sim Q}|\langle
v,\ph_{q}\rangle|$. Halasz-Montgomery's inequality states that this is
\[
\leq \| v\| \Big( \sum_{q_{1},q_{2}\sim Q} |\langle
\ph_{q_{1}},\ph_{q_{2}}\rangle|\Big)^{1/2},
\]
where $\|v\|:=\Big( \sum_{n\in [M,M+N]} |v_{n}|^{2}\Big)^{1/2}$.

Write $\tau_{Q}(n):=\sum_{d\mid n,d\sim Q}1$.
Now we have, since $M\geq a$ (also $Q,M\geq 2$ may be assumed w.l.o.g),
\begin{align}
   &\sum_{q_{1},q_{2}\sim Q} |\langle \ph_{q_{1}},\ph_{q_{2}}\rangle| =
   \sum_{q_{1},q_{2}\sim Q} \sum_{n\in [M,M+N]}\ph_{q_{1}}(n)
   \ph_{q_{2}}(n)  = \sum_{n\in [M,M+N]} \sum_{\substack{q_{1}\sim Q \\ q_{1}\mid n-a}} 
      \sum_{\substack{q_2\sim Q\\ q_{2}\mid n-a}} 1 \notag \\
  &=  \sum_{s,0\leq M-a< s\leq M+N-a} \sum_{\substack{q_1\sim Q \\ q_{1}\mid s}} 
      \sum_{\substack{q_2\sim Q\\ q_{2}\mid s}} 1
    \leq \sum_{s\in [M-a,M+N-a]} \tau_{Q}(s)^{2} \notag \\
    &= \sum_{s\in [M-a,M+N-a]} \tau_{Q}(s) \sum_{s=uv,u\sim Q} 1
    = \sum_{uv\in [M-a,M+N-a], u\sim Q} \tau_{Q}(uv)\notag \\
    &\leq \sum_{uv\in [M-a,M+N-a], u\sim Q} \tau(u)\tau(v)
    =\sum_{u\sim Q} \tau(u) 
       \sum_{v\in[\frac{M-a}{u},\frac{M+N-a}{u}]}\tau(v). \label{eq:divsum}
\end{align}
For the inner sum, we use Vorono\"i's estimate
\[
    \sum_{n\leq x}\tau(n) = x\log x+(2\gamma-1)x+O(x^{1/3}\log x),
\]
and continue \eqref{eq:divsum} with
\begin{align*}
  &\ll \sum_{u\sim Q} \tau(u) \Big(\frac{N}{u}\log M +
  \frac{M^{1/3}}{u^{1/3}}\log M\Big) \\
  &\ll N\log M\sum_{u\sim Q} \frac{\tau(u)}{u}
  + \sum_{u\sim Q} \frac{\tau(u)}{u^{1/3}} M^{1/3}\log M\\
  &\ll N(\log M)^{3} + Q^{2/3}M^{1/3}(\log M)^{3},
\end{align*}
where we used that $\sum_{u\leq Q}\frac{\tau(u)}{u}\ll (\log
Q)^{2}\leq (\log M)^{2}$.
\end{proof}

In a similar way, we can also prove the following variant which is
uniform in the residues. 
We can expect nontrivial estimates from this for the range
$Q\ll N^{\hb}$ for $Q$:
\begin{lemma}
  \label{lHM2} For a real $Q\geq 1$, integers
  $M,N\geq 2$ and $v_{M+1},\dots,v_{M+N}\in\C$ we have
\begin{equation}
\label{eq:lHM2}
   \sum_{q\sim Q} \max_{a\:(q)}
   \Big|\sum_{\substack{n\in [M,M+N]\\n\equiv a\:(q)}}
   v_{n}\Big| \ll\; (N\log (Q+1) +Q^2)^{1/2} \Big(\sum_{n\in [M,M+N]}
   |v_{n}|^{2}\Big)^{1/2}.
\end{equation}
\end{lemma}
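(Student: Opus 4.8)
The plan is to mimic the proof of Lemma \ref{lHM1}, replacing the fixed residue class $a$ by a maximizing one and using the Halász--Montgomery inequality once more, but now with a vector of characteristic functions whose residue class is allowed to depend on $q$. For each $q\sim Q$ let $a_q\:(q)$ be a residue for which the inner absolute value is maximal, and put $\ph_q(n):=1$ if $n\equiv a_q\:(q)$ and $0$ otherwise, so that $\ph_q\in\C^N$. Then the left-hand side of \eqref{eq:lHM2} equals $\sum_{q\sim Q}|\langle v,\ph_q\rangle|$, and Halász--Montgomery gives
\[
   \sum_{q\sim Q} \max_{a\:(q)}\Big|\sum_{\substack{n\in [M,M+N]\\n\equiv a\:(q)}} v_n\Big|
   \;\leq\; \|v\|\,\Big(\sum_{q_1,q_2\sim Q}|\langle \ph_{q_1},\ph_{q_2}\rangle|\Big)^{1/2},
\]
exactly as before. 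So everything reduces to bounding the double sum $\sum_{q_1,q_2\sim Q}|\langle\ph_{q_1},\ph_{q_2}\rangle|$.

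Now $\langle\ph_{q_1},\ph_{q_2}\rangle=\#\{n\in[M,M+N]:n\equiv a_{q_1}\,(q_1),\ n\equiv a_{q_2}\,(q_2)\}$. If $q_1=q_2$ this count is $\leq N/q_1+1$, contributing $\ll \sum_{q\sim Q}(N/q+1)\ll N\log(Q+1)+Q$ — this is where the factor $\log(Q+1)$ on the $N$-term enters, in place of the $(\log M)^3$ of Lemma \ref{lHM1}. If $q_1\neq q_2$, the two congruences are either incompatible (count $0$) or, by CRT, determine $n$ modulo $[q_1;q_2]$, so the count is $\leq N/[q_1;q_2]+1\leq N\,(q_1,q_2)/(q_1q_2)+1$. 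Summing the ``$+1$'' term over all pairs $q_1,q_2\sim Q$ gives $\ll Q^2$, which produces the $Q^2$ term in \eqref{eq:lHM2}. For the main term I would write $d=(q_1,q_2)$, $q_i=dm_i$ with $(m_1,m_2)=1$ and $m_i\asymp Q/d$, so that
\[
   \sum_{\substack{q_1,q_2\sim Q\\ q_1\neq q_2}}\frac{N(q_1,q_2)}{q_1q_2}
   \;\leq\; N\sum_{d\leq 2Q}\frac{1}{d}\sum_{\substack{m_1,m_2\asymp Q/d}}\frac{1}{m_1m_2}
   \;\ll\; N\sum_{d\leq 2Q}\frac{1}{d}\ll N\log(Q+1),
\]
since for fixed $d$ the inner double sum over the dyadic-type range $m_i\asymp Q/d$ is $O(1)$. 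Collecting the pieces, $\sum_{q_1,q_2\sim Q}|\langle\ph_{q_1},\ph_{q_2}\rangle|\ll N\log(Q+1)+Q^2$, and plugging this into the Halász--Montgomery bound yields \eqref{eq:lHM2}.

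The routine parts are the CRT count and the elementary divisor/harmonic sums; the only place that needs a little care — and the main conceptual obstacle compared with Lemma \ref{lHM1} — is that one can no longer pass to the divisor-sum identity $\tau_Q(s)^2$, because the residues $a_{q_1},a_{q_2}$ vary with $q_1,q_2$ and need not be $\equiv a\:(q_i)$ for a common $a$; so the solvability of the simultaneous congruence is governed purely by $[q_1;q_2]$ and whether $a_{q_1}\equiv a_{q_2}$ modulo $(q_1,q_2)$, rather than by divisibility of a fixed $n-a$. This is precisely why the bound is worse, degrading the $N^{2/3}M^{1/3}$-type term of Lemma \ref{lHM1} to $Q^2$, and hence the stated expectation that nontrivial estimates hold only in the range $Q\ll N^{\hb}$.
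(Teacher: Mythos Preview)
Your proof is correct and follows essentially the same path as the paper: Hal\'asz--Montgomery with residues $a_q$ chosen to maximize, followed by the CRT bound $\#\{n\}\leq N/[q_1;q_2]+1$ and an elementary estimate for $\sum_{q_1,q_2\sim Q}1/[q_1;q_2]$. The only difference is cosmetic --- the paper decomposes via $d\mid q_1$, $(q_1,q_2)=d$ and bounds by pulling out a factor $1/Q$ and summing $\tau(q_1)$, while you write $q_i=dm_i$ and exploit that each $m_i$ lies in a dyadic range; both routes give $\ll N\log Q+Q^2$.
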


\begin{proof}
  For $a$ mod $q$ define
\[
   \ph_{q,a}(n):=\begin{cases} 1, &\text{ if } n\equiv a \;(q), \\
                             0, &\text{ else,} \end{cases}
\]
and let $a=a_q$ be a residue mod $q$ such that $|\langle
v,\ph_{q,a}\rangle|$ is maximal. 
Then we have
\begin{align*}
   &\sum_{q_{1},q_{2}\sim Q} |\langle \ph_{q_{1},a_{q_1}},\ph_{q_{2},a_{q_2}}\rangle| =
   \sum_{q_{1},q_{2}\sim Q} \sum_{n\in [M,M+N]}\ph_{q_{1},a_{q_1}}(n)
   \ph_{q_{2},a_{q_2}}(n)  \\ &
  =   \sum_{q_1,q_2\sim Q} 
  \sum_{\substack{n\in [M,M+N] \\ n\equiv a_{q_1}\: (q_1) \\ n\equiv a_{q_2}\: (q_2) }} 1
   \leq \sum_{q_1,q_2\sim Q} \Big(\frac{N}{[q_1;q_2]} + 1 \Big)
  \\ &= Q^2 +  \sum_{q_1\sim Q} \frac{N}{q_1}\sum_{d\mid q_1} 
    \sum_{\substack{q_2\sim Q\\(q_2,q_1)=d}} \frac{d}{q_2} 
   \leq Q^2 + \frac{N}{Q} \sum_{q_1\sim Q} \sum_{d\mid q_1}
   \sum_{q_2'\sim Q/d} \frac{1}{q_2'} \\ 
&\ll Q^2  + \frac{N}{Q} \sum_{q_1\sim Q} \tau(q_1) \ll Q^2+ N \log Q. 
\end{align*}
Then again Halasz-Montgomery's inequality shows the assertion.
\end{proof}

As a further tool, we use
the theorem of Perelli, Pintz and Salerno in \cite{PePiSa}
in the following form, it is a Bombieri-Vinogradov theorem for short
intervals. 
\begin{theorem} 
\label{pepisa}
Let $X\geq Y\gg X^{3/5+\eps}$ and $Q\ll YX^{-1/2}(\log X)^{-B}$. Then
\[
   \sum_{q\leq Q}\max_{(a,q)=1} \Big|
      \sum_{\substack{p\in[X,X+Y]\\p\equiv a \:(q)}} \log p -
{\frac{Y}{\varphi(q)}}\Big|\ll  {\frac{Y}{L^{A}}}.
\]
\end{theorem}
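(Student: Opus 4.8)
This being a known theorem, I only indicate the standard route to it. The plan is to follow the classical proof of the Bombieri--Vinogradov theorem, inserting the short-interval restriction at the level of Dirichlet polynomials. First I would pass from primes to the von Mangoldt sum $\psi(X+Y;q,a)-\psi(X;q,a)$ (the prime-power terms contribute $\ll QX^{1/2}$, hence $\ll YL^{-A}$), then to Dirichlet characters via
\[
  \max_{(a,q)=1}\Big|\psi(X+Y;q,a)-\psi(X;q,a)-\frac{Y}{\varphi(q)}\Big|
   \le \frac{1}{\varphi(q)}\sum_{\chi\neq\chi_{0}}\big|\psi(X+Y,\chi)-\psi(X,\chi)\big|,
\]
and reduce to primitive characters, regrouping the outer sum by the conductor $r$ of $\chi$ with a weight $\ll\varphi(r)^{-1}\log(2Q/r)$. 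The conductors $r\le L^{2B}$ are disposed of by a short-interval Siegel--Walfisz estimate (the explicit formula together with the classical zero-free region), which contributes $\ll YL^{-A}$ once $B$ is taken large in terms of $A$.

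For $L^{2B}<r\le Q$ I would apply Vaughan's identity to $\Lambda(n)\mathbf{1}_{X<n\le X+Y}$, producing $O(L)$ bilinear pieces: Type~I sums $\sum_{d\le D}a_{d}\sum_{X/d<m\le(X+Y)/d}\chi(dm)$ with $D=X^{1/3}$, and Type~II sums $\sum_{m}\sum_{n}a_{m}b_{n}\chi(mn)$ with $X^{1/3}\ll m,n\ll X^{2/3}$ and $mn\asymp X$, each restricted to $X<mn\le X+Y$. The short interval is detected by Perron's formula,
\[
  \sum_{X<mn\le X+Y}a_{m}b_{n}\chi(mn)
   =\frac{1}{2\pi i}\int_{c-iT}^{c+iT}
     \Big(\sum_{m}a_{m}\chi(m)m^{-s}\Big)\Big(\sum_{n}b_{n}\chi(n)n^{-s}\Big)
     \frac{(X+Y)^{s}-X^{s}}{s}\,ds+O\Big(\frac{XL^{2}}{T}\Big),
\]
with cutoff $T\asymp XY^{-1}L^{C}$; since the kernel is $\ll YX^{-1}\min(1,(Y|t|)^{-1})$, one effectively has $|t|\ll T$ and a gain of $Y/X$ per unit $t$-interval.

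The heart of the matter is a mean-value estimate for the twisted Dirichlet polynomials $\sum_{m}a_{m}\chi(m)m^{-it}$ and $\sum_{n}b_{n}\chi(n)n^{-it}$, averaged simultaneously over $r\le Q$, over primitive $\chi\bmod r$, and over $|t|\le T$. For this I would combine the hybrid large sieve inequality for Dirichlet characters with a Huxley--Jutila-type zero-density bound of the shape $\sum_{r\le Q}\sideset{}{^{\displaystyle *}}\sum_{\chi\bmod r}N(\sigma,T,\chi)\ll(Q^{2}T)^{c(1-\sigma)}(\log QT)^{O(1)}$, estimating the Type~II sums by Cauchy--Schwarz against this input and the Type~I sums more directly. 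This yields a total bound of the shape $Y\big(X^{1/2}Q^{-1}+(\text{terms governed by the density exponent and by }T)\big)L^{O(1)}$; feeding in $Q\ll YX^{-1/2}L^{-B}$, so that $X^{1/2}Q^{-1}\ge L^{B}$ absorbs the $\varphi(r)^{-1}$ and logarithmic losses, and $Y\gg X^{3/5+\eps}$, which is exactly what the density estimate near $\sigma=\hb$ permits once $T\asymp XY^{-1}$, makes every contribution $\ll YL^{-A}$.

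The step I expect to be the main obstacle is the Type~II estimate under these two constraints at once. The short interval forces the completion step above with a comparatively large cutoff $T\asymp XY^{-1}$, and one must then still beat the trivial bound, which is possible only by exploiting the bilinear structure through the large sieve and the zero-density estimate. It is precisely the balance between the size of $T$ and the available density exponent near the critical line that pins down the threshold $3/5+\eps$ together with the joint restriction on $Q$; a weaker density input, or a larger range for $Q$, would force a larger exponent of $Y$.
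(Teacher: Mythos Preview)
The paper does not prove this statement at all: it is quoted in Section~\ref{HalMont} as a known tool, attributed to Perelli, Pintz, and Salerno \cite{PePiSa}, with no argument supplied. So there is no ``paper's own proof'' to compare your sketch against.

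That said, your outline is a faithful high-level summary of the classical route to short-interval Bombieri--Vinogradov theorems and is broadly consistent with what \cite{PePiSa} does. A small remark on the internal logic of your sketch: you invoke both Vaughan's identity and a Huxley--Jutila zero-density estimate, but these are largely alternative inputs rather than ingredients to be combined. The $3/5$ exponent in \cite{PePiSa} is obtained from the density theorem (via the explicit formula truncated at height $T\asymp X/Y$, with Huxley's bound $N(\sigma,T,\chi)\ll (Q^{2}T)^{12(1-\sigma)/5+\eps}$ governing the threshold), and Vaughan's decomposition is not needed there; conversely, a pure Vaughan-plus-large-sieve argument of Bombieri--Vinogradov type typically reaches only $Y\gg X^{2/3+\eps}$ or thereabouts without additional input. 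If you intend a self-contained write-up, pick one of the two routes and follow it through; the zero-density route is the one that actually delivers $3/5$.
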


Further we use a special case of Kawada's Theorem in \cite{Kaw}
in the following formulation:
\begin{theorem} 
   \label{th:kaw}  
Let $X_1^{2/3+\eps}\ll Y\leq X_{1}$, let $Y\leq X_{2}$, 
let $X_2\leq 2X_{1}-Y$ and $Q_{2}\ll
YX_{2}^{-1/2}L^{-B}$. Then for any integer $a_{2}$, we have
\begin{equation}
\label{eq:eqfor5} 
  \sum_{q_2\leq Q_2}
  \sum_{k_1\in [X_{1},X_1+Y]}
   \Big| \sum_{\substack{p_2+p_3=2k_1\\ p_2\equiv a_2\:(q_2) \\ p_2\in
       [X_{2}, X_2+Y]}}
   \log p_2 \log p_3 - \mathfrak{S}(2k_1,q_{2},a_{2})Y \Big| \ll Y^2 L^{-A},
\end{equation}
with $\mathfrak{S}$ as given in \eqref{eq:sing1}.
\end{theorem}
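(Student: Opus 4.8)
The plan is to recognise the statement as Kawada's general mean value theorem in \cite{Kaw}, specialised to the binary Goldbach case $p_{2}+p_{3}=2k_{1}$ with a single AP-condition (on $p_{2}$) and with both $p_{2}$ and the representing number $k_{1}$ running over short intervals; one legitimate route is simply to quote \cite{Kaw}, check that the hypotheses above imply his, and verify that his main term reduces to $\mathfrak{S}(2k_{1},q_{2},a_{2})Y$. Below I outline the underlying circle-method argument. Fix $q_{2}$ and a residue $a_{2}$, take a Farey dissection of order $\asymp Y$, and write the inner Goldbach count as $\int_{0}^{1}S(\alpha)T(\alpha)\,e(-2k_{1}\alpha)\,d\alpha$, where $S(\alpha)=\sum_{p_{2}}(\log p_{2})e(p_{2}\alpha)$ runs over primes $p_{2}\in[X_{2},X_{2}+Y]$ with $p_{2}\equiv a_{2}\,(q_{2})$, and $T(\alpha)=\sum_{p_{3}}(\log p_{3})e(p_{3}\alpha)$ runs over primes $p_{3}\in[X_{3}-Y,X_{3}+2Y]$ with $X_{3}:=2X_{1}-X_{2}$; the hypothesis $X_{2}\le 2X_{1}-Y$ ensures $X_{3}\ge Y$, so that $T$ lives on an interval of length $\asymp Y$ positioned like the interval for $S$. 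Split $[0,1]$ into major arcs $\mathfrak{M}$ (points within $L^{B_{0}}/(qY)$ of some $a/q$ with $q\le L^{B_{0}}$, for a suitable $B_{0}$) and the complementary minor arcs $\mathfrak{m}$.

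On $\mathfrak{M}$ I would replace $S$ and $T$ by their expected principal parts. Since $Y\gg X_{1}^{2/3+\eps}$ forces $X_{2}<2X_{1}\ll Y^{3/2-\eps'}$ and $X_{3}\le 2X_{1}\ll Y^{3/2-\eps'}$, each of the two intervals is long enough for the prime number theorem in short intervals, and the average over $q_{2}\le Q_{2}$ of the resulting error for $S$ (with its $q_{2}$-modulus interlaced with the major-arc modulus) is controlled by Theorem~\ref{pepisa}, whose hypotheses are met here ($X_{2}\ge Y$, $Q_{2}\ll YX_{2}^{-1/2}L^{-B}$, and $Y\gg X_{2}^{3/5+\eps}$ since $X_{2}\ll Y^{3/2-\eps'}$) and whose normalisation, with weight $\log p$ and main term $Y/\varphi(q)$, is exactly the one needed. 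Assembling the contributions, the major-arc sum over $q\le L^{B_{0}}$ and reduced residues produces $\mathfrak{S}(2k_{1},q_{2},a_{2})Y$ after a routine Ramanujan-sum computation matching the series \eqref{eq:sing1}--\eqref{eq:sing4}, while the total error, after summation over $k_{1}\in[X_{1},X_{1}+Y]$ and over $q_{2}\le Q_{2}$, is $\ll Y^{2}L^{-A}$, again by Theorem~\ref{pepisa}.

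On $\mathfrak{m}$ the idea is to use both outer summations. For fixed $q_{2}$, Cauchy--Schwarz in $k_{1}$ together with Bessel's inequality for the orthonormal system $\{e(2k_{1}\alpha)\}_{k_{1}}$ on $[0,1]$ gives
\[
   \sum_{k_{1}\in[X_{1},X_{1}+Y]}\bigl|\textstyle\int_{\mathfrak{m}}ST\,e(-2k_{1}\alpha)\,d\alpha\bigr|
   \ll Y^{1/2}\bigl(\textstyle\int_{\mathfrak{m}}|ST|^{2}\,d\alpha\bigr)^{1/2},
\]
and then Cauchy--Schwarz over $q_{2}\le Q_{2}$ reduces the whole minor-arc contribution to a fourth-moment bound of the form $\sum_{q_{2}\le Q_{2}}\int_{\mathfrak{m}}|ST|^{2}\,d\alpha\ll Y^{2}X_{2}^{1/2}L^{c}$ for some absolute $c$, after which the loss $L^{-B}$ in $Q_{2}\ll YX_{2}^{-1/2}L^{-B}$, with $B=B(A)$ chosen large, brings everything below $Y^{2}L^{-A}$. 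The diagonal of this fourth moment is supplied by the mean values $\int_{0}^{1}|S|^{2}\ll LY/\varphi(q_{2})$ (Brun--Titchmarsh) and $\sum_{q_{2}\le Q_{2}}\int_{0}^{1}|S|^{2}\le\sum_{p_{2}\in[X_{2},X_{2}+Y]}(\log p_{2})^{2}\tau(p_{2}-a_{2})\ll YL^{3}$.

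The main obstacle is exactly this fourth-moment estimate on $\mathfrak{m}$, and it is genuinely delicate. Simply extracting $\sup_{\mathfrak{m}}|T|$ (or $\sup_{\mathfrak{m}}|S|$) and using the diagonal bound for the other factor is \emph{not} strong enough throughout: the intervals range from genuinely short (when $X_{1}\gg Y^{1+\delta}$) all the way down to essentially dyadic (when $X_{1}\asymp Y$), and there the classical Vinogradov exponent $4/5$ for the pointwise minor-arc bound exceeds the $3/4$ that one could afford. Kawada's argument therefore employs a three-way dissection --- major, intermediate and truly minor arcs --- and plays the $k_{1}$-average off against the arithmetic structure of the intermediate arcs, via Vaughan/Heath-Brown identities combined with the short-interval techniques underlying \cite{PePiSa}; the exponent $2/3$ in the hypothesis $X_{1}^{2/3+\eps}\ll Y$ is precisely the threshold at which the intermediate-arc treatment succeeds (the weaker $7/12$ would already suffice for the major arcs alone). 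Everything else --- the major-arc asymptotics and the identification of the singular series with $\mathfrak{S}(2k_{1},q_{2},a_{2})$ --- is routine given Theorem~\ref{pepisa} and the prime number theorem in short intervals.
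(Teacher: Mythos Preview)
Your proposal is essentially correct and follows the same route as the paper: both recognise the statement as Kawada's theorem \cite[Thm.~2]{Kaw} specialised to $k=2$ with one AP-condition, and both explain the role of $X_{2}\le 2X_{1}-Y$ in keeping the $p_{3}$-interval away from zero and of length $\asymp Y$. The paper's own treatment is shorter: it simply cites Kawada, notes that his minor-arc argument (via Bessel's inequality, cf.\ \S4 and (4.3)--(4.5) of \cite{Kaw}) is insensitive to the interval endpoints and uses only $Y\gg X_{1}^{2/3+\eps}$, and that his major-arc argument (\S5 of \cite{Kaw}) goes through verbatim under $Q_{2}\ll YX_{2}^{-1/2}L^{-B}$ together with $Y\gg X_{2}^{3/5+\eps}$, which follows from the stated hypotheses. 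You go further and sketch the circle method; the major-arc part and the first minor-arc step (Cauchy--Schwarz/Bessel in $k_{1}$) match the paper's account, but your subsequent reduction via an extra Cauchy--Schwarz over $q_{2}$ to a fourth-moment bound, and the description of a ``three-way dissection'' with intermediate arcs, go beyond what the paper says about Kawada's \S4 treatment and may not reflect it faithfully. Since you correctly flag this as the delicate step and ultimately defer to \cite{Kaw} for it, the proposal stands.
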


This is by Kawada's result \cite[Thm.~2]{Kaw} for the special case
 $k=2, a_0=1,b_0=0, a_1=-1,b_1=2k_1$, but with a small change
made, namely the left boundaries $X_1$ and $X_2$ of the short
intervals.
Originally, the restriction $X_{1}=X_{2}$ has been stated
in \cite{Kaw}, but Kawada's proof works also for any $X_{2}\leq
2X_{1}-Y$: In fact, for the minor arc contribution, the interval boundaries
in the exponential sums do not play a role 
due to the use of Bessel's inequality at
the end of \S 4 in \cite{Kaw}, cp.\ also
\cite[(4.3)--(4.5)]{Kaw},
where the case $k=2$ is treated individually.
For the minor arc contribution, just the assumption
$Y\gg X_{1}^{2/3+\varepsilon}$ is used.

The condition $X_{2}\leq 2X_{1}-Y$ is added to avoid that the necessary
interval for $p_{3}=2k_{1}-p_{2}$ has a negative left boundary and hence a
cut-off at $0$.
In Kawada's major arc treatment, the exponential sum range
for $p_{3}$ has then still length $\asymp Y$
(there the sum is termed  $P(\alpha)$ and is
approximated by $\frac{\mu(q)}{\varphi(q)}T(\beta)$).
Then the proof of (3.8) and (3.9) 
in \S 5 of \cite{Kaw} reads the same.
The major arc treatment uses then the assumption $Q_{2}\ll
YX_{2}^{-1/2}L^{-B}$ and $Y\gg X_{2}^{3/5+\varepsilon_{2}}$, but the
latter estimate follows from the other assumptions in Theorem
\ref{th:kaw}.

For the second approach, we use an adaption of the
Theorem of Perelli and Pintz in \cite{PePi}. It was independently found
by Mikawa in \cite{Mik2}.
Originally, this theorem states:
If $X_{1}^{1/3+\eps}\ll R\leq X_{1}$, then
\[
   \sum_{2k\in[X_{1},X_{1}+R]} \Big| \sum_{p_{2}+p_{3}=2k} \log p_{2}
   \log p_{3}  - \mathfrak{S}(2k)2k  \Big|^{2} \ll RX_{1}^{2} L^{-A}.
\]

The proof in \cite{PePi} can be adapted in such a way that
an additional short interval-condition can be put on one of 
the primes, namely it can be shown (cp.\ \cite[Thm.~3]{PePi}):
\begin{theorem}
  \label{th:GBshortIV}
   Let $Y^{1/3+\eps}\ll R \ll Y \asymp 2X_{1}-X_{2}-Y\geq 0$,
   let $R\leq X_{1}$
   and $X_{2}^{7/12+\eps_{2}}\ll Y \leq X_{2}$, then
   \[
        \sum_{k\in[X_{1},X_{1}+R]}  \Big|
        \sum_{\substack{p_{2}+p_{3}=2k\\ p_{2}\in [X_{2},X_{2}+Y]}}
        \log p_{2} \log p_{3} - \mathfrak{S}(2k)Y\Big|^{2} \ll RY^{2}L^{-A}.
   \]
\end{theorem}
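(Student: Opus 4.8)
The plan is to run the circle method as in Perelli and Pintz \cite{PePi} (see also Mikawa \cite{Mik2}), inserting the modifications forced by the extra restriction $p_2\in[X_2,X_2+Y]$. Set $K:=[2X_1-X_2-Y,\,2X_1+2R-X_2]$; the hypothesis $2X_1-X_2-Y\asymp Y$ together with $R\ll Y$ makes both endpoints of $K$ of order $Y$ and $|K|=Y+2R\asymp Y$, so that for $p_2\in[X_2,X_2+Y]$ and $k\in[X_1,X_1+R]$ the companion prime $p_3=2k-p_2$ automatically lies in $K$. Writing
\[
   S_2(\alpha):=\sum_{p_2\in[X_2,X_2+Y]}(\log p_2)e(p_2\alpha),\qquad
   S_3(\alpha):=\sum_{p_3\in K}(\log p_3)e(p_3\alpha),
\]
we then have, for every such $k$,
\[
   \sum_{\substack{p_2+p_3=2k\\ p_2\in[X_2,X_2+Y]}}\log p_2\log p_3
   =\int_0^1 S_2(\alpha)S_3(\alpha)e(-2k\alpha)\,d\alpha,
\]
and I would dissect $[0,1]$ into major arcs $\mathfrak{M}$ (the intervals $|\alpha-a/q|\le P/(qY)$ with $q\le P$, $(a,q)=1$, where $P$ is a fixed power of $L$ depending on $A$) and the complementary minor arcs $\mathfrak{m}$.

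On $\mathfrak{M}$ the work is routine but is where the hypothesis on $Y$ enters: I would approximate $S_2(a/q+\beta)$ by $\tfrac{\mu(q)}{\varphi(q)}\int_{X_2}^{X_2+Y}e(\beta t)\,dt$ and $S_3(a/q+\beta)$ by $\tfrac{\mu(q)}{\varphi(q)}\int_K e(\beta t)\,dt$. For $S_3$, whose range is essentially a full interval of length $\asymp Y$, the classical prime number theorem in progressions to modulus $q\le P$ suffices; for $S_2$ one needs the prime number theorem in the short interval $[X_2,X_2+Y]$ and in the progression $a\;(q)$, and Huxley's zero-density estimate supplies this asymptotic exactly when $Y\gg X_2^{7/12+\eps_2}$ (alternatively one may quote Theorem~\ref{pepisa}). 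Substituting these approximations, summing over $a\;(q)$ to produce Ramanujan sums $c_q(2k)$, integrating in $\beta$ (the convolution of the two indicator functions at $2k$ equals exactly $Y$, the length of $[X_2,X_2+Y]$), and completing the sum over $q$ at the cost of $O(L^{-A})$, I obtain from \eqref{eq:sing4}
\[
   \int_{\mathfrak{M}}S_2(\alpha)S_3(\alpha)e(-2k\alpha)\,d\alpha
   =\Big(\sum_{q\ge1}\frac{\mu^2(q)}{\varphi^2(q)}c_q(2k)\Big)Y+O(YL^{-A})
   =\mathfrak{S}(2k)Y+O(YL^{-A}).
\]
Since we only want a mean square over the $\ll R$ values of $k$, this pointwise error contributes an admissible $\ll RY^2L^{-2A}$.

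The main obstacle is the minor-arc bound $\Sigma_{\mathfrak{m}}:=\sum_{k\in[X_1,X_1+R]}\big|\int_{\mathfrak{m}}S_2(\alpha)S_3(\alpha)e(-2k\alpha)\,d\alpha\big|^2\ll RY^2L^{-A}$. Here I would first apply Gallagher's lemma: since the $2k$ run over an interval of length $\asymp R$ and $1/R\gg1/Y$ (because $R\ll Y$), one gets
\[
   \Sigma_{\mathfrak{m}}\ll R\int_{\mathfrak{m}}\Big(\int_{|\gamma|<1/(2R)}|S_2(\alpha+\gamma)S_3(\alpha+\gamma)|\,d\gamma\Big)^2 d\alpha .
\]
I would then estimate the right-hand side by decomposing $\mathfrak{m}$ dyadically according to the denominator of the nearest rational; on the piece attached to denominators $\sim q$ one uses, via Vaughan's identity, a minor-arc bound for the long sum $S_3$ (valid for $q$ up to a fixed power of $Y$), while for the short sum $S_2$ one uses only the Parseval identity $\int_0^1|S_2|^2\ll YL$, together with the bound $\int_0^1|S_3|^2\ll YL$. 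Balancing these contributions against the Gallagher smoothing scale $1/R$ and the measure $\asymp 1/(qY)$ of each arc — this is exactly the computation carried out in \cite{PePi} — leads to a bound of the shape $Y^2\big(Y/R^2+\cdots\big)L^{-A}$, whose principal term is $\ll RY^2L^{-A}$ under the hypothesis $R\gg Y^{1/3+\eps}$. Performing this balancing, and in particular checking that the additional short-interval constraint on $p_2$ does not degrade the minor-arc estimates of \cite{PePi}, is the delicate step; the remaining hypotheses ($R\le X_1$, $R\ll Y$) serve only to keep $K$ and all smoothings of size $\asymp Y$. Adding the major- and minor-arc contributions then yields the claimed estimate.
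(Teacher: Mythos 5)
Your strategy — run the circle method as in Perelli and Pintz, letting the condition $Y\asymp 2X_1-X_2-Y$ supply an interval $K$ of length $\asymp Y$ for $p_3$, handling the major arcs via the short-interval PNT (Huxley's density for $Y\gg X_2^{7/12+\eps_2}$), and attacking the minor-arc mean square via Gallagher's lemma — is precisely the adaptation of \cite{PePi} that the paper itself invokes. The paper does not give a detailed argument either: it only asserts that the proof of \cite[Thm.~3]{PePi} can be adapted and that the replacement of $2k$ by $Y$ in the main term is obvious, so your level of detail actually exceeds the paper's. Your identification of $K=[2X_1-X_2-Y,\,2X_1+2R-X_2]$, the check that both endpoints are $\asymp Y$ under the standing hypotheses, and the computation that the convolution of the two indicator functions equals exactly $Y$ are all correct.

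Two remarks are worth making. First, the paper explicitly singles out, besides Gallagher's lemma, ``an argument due to Saffari and Vaughan'' \cite{SafVau} as the place where the hypothesis $Y\asymp 2X_1-X_2-Y$ is used and indeed as the delicate part of the adaptation; your write-up mentions Gallagher but not Saffari--Vaughan, and this device (passing between sharp cut-offs on the short interval for $k$ and smoothed versions) is what actually produces the saving of a factor $R$ over Bessel's inequality, which a raw $\sup_{\mathfrak m}|S_3|\cdot\int|S_2|^2$ bound does not give. Second, the specific Gallagher inequality you display and the pairing ``pointwise minor-arc bound for $S_3$, Parseval for $S_2$'' do not by themselves close the gap: the trivial Bessel bound already yields $\int_{\mathfrak m}|S_2S_3|^2\approx Y^3L^{O(1)}$, which is off by a factor $Y/R$ from the target $RY^2L^{-A}$, and the bound you write down does not obviously recover that factor in the full range $R\gg Y^{1/3+\eps}$. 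You flag this yourself as ``the delicate step''; it is exactly here that one must reproduce the Gallagher--Saffari--Vaughan argument of \cite{PePi} and check that the additional restriction $p_2\in[X_2,X_2+Y]$ is compatible with it, as the paper indicates.
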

The condition $Y \asymp 2X_{1}-X_{2}-Y$ comes from the necessary
interval condition for the prime $p_{3}$, in the original proof
in \cite{PePi} this plays an important role in the application
of Gallagher's Lemma and an argument due to Saffari and Vaughan
in \cite{SafVau}. It is not easy to delete this condition.
In contrast, the replacement of $2k$ by $Y$ in the main term
is an obvious adaption.

%

\section{Proof of Theorem  \ref{th:1}--Kawada-approach}
\label{seczweiAP}

We start by proving the following binary theorem
with one prime in a given arithmetic progression and lying in a
short interval.

\begin{theorem}
  \label{th:binGBtwoprimes}
Let $X_1^{2/3+\eps}\ll Y\leq X_{1}$, let $Y\leq X_2\leq 2X_{1}-Y$, 
let $Q_{1}\ll Y^{3/2}X_{1}^{-1/2}$ and $Q_{2}\ll YX_{2}^{-1/2}L^{-B}$.
Then, for any fixed integers $a_{1}$ and $a_{2}$, where
 $a_{1}\leq X_{1}$, we have
\begin{align*}
  \sum_{q_1\leq Q_1}
   \sum_{\substack{k_1\in[X_{1},X_1+Y]\\
      2k_1\equiv a_1\:(q_1)}}
   \sum_{q_2\leq Q_2}
    \Big| \sum_{\substack{p_2+p_3=2k_1 \\ p_2\equiv a_2 \:(q_2) \\
        p_2 \in [X_{2},X_2+Y]}}
    \log p_2 \log p_3 -\mathfrak{S}(2k_1,q_{2},a_{2})Y \Big| \\ 
       \ll Y^2 L^{-A}.
\end{align*}
If $Q_{1}\ll Y^{3/2}X_{1}^{-1/2}$ is replaced by $Q_{1}\ll Y^{1/2}$,
then the estimate holds true with $\max_{a_{1}\:(q_{1})}$ inserted
after $\sum_{q_{1}}$. 
\end{theorem}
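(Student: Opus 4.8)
The plan is to reduce the claimed estimate to Kawada's Theorem \ref{th:kaw} by removing the innermost quantitative dependence on $q_1$ and $k_1$, exploiting that the inner double sum over $q_2$ and $p_2$ is the quantity controlled in \eqref{eq:eqfor5}. First I would fix $q_1 \leq Q_1$ and sum over the $k_1 \in [X_1, X_1+Y]$ with $2k_1 \equiv a_1\ (q_1)$. For each such $k_1$, Theorem \ref{th:kaw} already bounds $\sum_{q_2\leq Q_2}\big|\sum_{p_2+p_3 = 2k_1,\,\dots}\log p_2\log p_3 - \mathfrak{S}(2k_1,q_2,a_2)Y\big|$ \emph{on average over all} $k_1$ in the interval, but here we need it on average over the sub-progression $2k_1 \equiv a_1\ (q_1)$. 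The natural device is to not use \eqref{eq:eqfor5} as a black box for each $k_1$, but to go into the structure: introduce the error term $E(k_1) := \sum_{q_2\leq Q_2}\big|\sum_{p_2+p_3=2k_1,\, p_2\in[X_2,X_2+Y],\, p_2\equiv a_2(q_2)}\log p_2\log p_3 - \mathfrak{S}(2k_1,q_2,a_2)Y\big|$, so the left-hand side is $\sum_{q_1\leq Q_1}\sum_{2k_1\equiv a_1(q_1),\,k_1\sim} E(k_1)$. Swapping the order, this is $\sum_{k_1\in[X_1,X_1+Y]} E(k_1)\,\#\{q_1\leq Q_1 : q_1 \mid 2k_1 - a_1\} \leq \sum_{k_1} E(k_1)\,\tau(2k_1-a_1)$, since the number of $q_1\leq Q_1$ dividing $2k_1-a_1$ is at most $\tau(|2k_1-a_1|)$ (and the $k_1$ with $2k_1=a_1$ contribute negligibly).

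The main obstacle is therefore to show $\sum_{k_1\in[X_1,X_1+Y]} \tau(2k_1 - a_1)\,E(k_1) \ll Y^2 L^{-A}$, i.e.\ to insert a divisor weight into Kawada's mean-value estimate. I would handle this by a Cauchy--Schwarz split, or better by a dyadic/combinatorial argument: write $\tau(2k_1-a_1) = \sum_{d \mid 2k_1-a_1} 1$ and split the divisor $d$ at $Q_1$ (valid since every $q_1$ counted is $\leq Q_1$, so it suffices to bound $2\sum_{d\leq Q_1}\sum_{k_1:\, d\mid 2k_1-a_1} E(k_1)$, the two halves of the divisors being symmetric up to the usual $\tau(m)\leq 2\sum_{d\mid m,\,d\leq\sqrt m}1$ trick — but here the relevant truncation is at $Q_1$, not $\sqrt{2k_1}$, which is why we want $Q_1\ll Y^{3/2}X_1^{-1/2}$). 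For fixed $d\leq Q_1$, the inner sum $\sum_{k_1\in[X_1,X_1+Y],\, 2k_1\equiv a_1(d)} E(k_1)$ is again a mean value of Kawada type but over a progression mod $d$. The decisive point is that Kawada's proof of \eqref{eq:eqfor5} does not really use that $k_1$ runs over a full interval: its minor-arc treatment uses Bessel's inequality (as noted in the remarks after Theorem \ref{th:kaw}) and the major-arc treatment is an averaged singular-series computation, both of which survive restriction of $k_1$ to a residue class mod $d$ at the cost of a factor $1/d$ in the length and an extra summation; carrying the parameter $d$ through Kawada's argument and summing $\sum_{d\leq Q_1}(\cdots)$ should yield the bound provided $Q_1^2 X_1 \ll Y^3 L^{-B'}$, i.e.\ exactly the hypothesis $Q_1\ll Y^{3/2}X_1^{-1/2}$ (the $L^{-B}$ being absorbed as usual). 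I expect this re-examination of Kawada's minor-arc estimate with an auxiliary modulus $d$ to be the technical heart of the proof.

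For the second statement, where $Q_1 \ll Y^{1/2}$ and a $\max_{a_1(q_1)}$ is inserted, the argument is the same up to the point of counting: after choosing for each $q_1$ the worst residue $a_1 = a_1(q_1)$, the bound becomes $\sum_{q_1\leq Q_1}\sum_{k_1\sim,\, 2k_1\equiv a_1(q_1)(q_1)} E(k_1)$, and reversing the order gives $\sum_{k_1} E(k_1)\cdot\#\{q_1\leq Q_1 : 2k_1\equiv a_1(q_1)\ (q_1)\}$. Here one no longer has a clean divisor bound, so instead I would apply Cauchy--Schwarz in the $q_1$-variable combined with the elementary bound $\#\{(q_1,k_1): q_1\leq Q_1,\ k_1\in[X_1,X_1+Y],\ \text{$k_1$ in the chosen class mod }q_1\} \ll \sum_{q_1\leq Q_1}(Y/q_1 + 1)\ll Y\log Q_1 + Q_1^2$, which is $\ll Y L^{O(1)}$ precisely when $Q_1\ll Y^{1/2}$; this matches the weaker admissible range and the loss is again absorbed into $L^{-A}$. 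In both cases the conditions $Y\leq X_2\leq 2X_1 - Y$ and $Q_2\ll YX_2^{-1/2}L^{-B}$ and $X_1^{2/3+\eps}\ll Y$ are used only through the invocation of Theorem \ref{th:kaw} (resp.\ its modulus-$d$ refinement), so no further hypotheses are needed.
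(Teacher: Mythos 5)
Your reformulation of the left-hand side as $\sum_{k_1} E(k_1)\,r(k_1)$ with $r(k_1):=\#\{q_1\leq Q_1: q_1\mid 2k_1-a_1\}$ is the right starting point, and your treatment of the $Q_1\ll Y^{1/2}$ max-version is correct: Cauchy--Schwarz together with $\sum_{q_1,q_2\leq Q_1}(Y/[q_1;q_2]+1)\ll Y L^{3}+Q_1^2$ and Kawada's \eqref{eq:eqfor5} as a black box is exactly what the paper's Lemma~\ref{lHM2} packages. The gap is in your route for the wider range $Q_1\ll Y^{3/2}X_1^{-1/2}$. You propose to carry an auxiliary modulus $d$ through Kawada's proof and assert that the minor-arc Bessel step ``survives restriction of $k_1$ to a residue class mod $d$ at the cost of a factor $1/d$ in the length.'' This does not hold: writing $k_1=c+dm$ and trying to use the period $1/(2d)$ of $e(2dm\alpha)$ forces you to fold the minor arcs into $[0,1/(2d)]$, and the Cauchy--Schwarz loss over the $2d$ shifts of the exponential generating function exactly cancels the $1/(2d)$ gain from orthogonality; Bessel for the subset of frequencies $\{e(2k_1\alpha): k_1\equiv c\,(d)\}$ gives only the same bound as for the full range, not one smaller by $1/d$. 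So $\sum_{d\leq Q_1}$ of these estimates does not converge, and the step you yourself flag as ``the technical heart of the proof'' fails as described.

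The paper never reopens Kawada's argument. It bounds $\sum_{k_1}E(k_1)r(k_1)\leq\|E\|_2\|r\|_2$ via Halász--Montgomery (Lemma~\ref{lHM1}), and the whole content lies in the estimate $\|r\|_2^2=\sum_{q_1,q_2\leq Q_1}\#\{k_1\in[X_1,X_1+Y]:[q_1;q_2]\mid 2k_1-a_1\}\leq\sum_{s}\tau_{Q_1}(s)^2$, where $s$ runs over a short interval around $X_1$. Vorono\"i's divisor estimate then gives $\|r\|_2^2\ll\bigl(Y+Q_1^{2/3}X_1^{1/3}\bigr)L^{3}$, and the hypothesis $Q_1\ll Y^{3/2}X_1^{-1/2}$ is precisely what makes the off-diagonal term $\ll Y$; this improves on the trivial $YL^3+Q_1^2$ (which gives only $Q_1\ll Y^{1/2}$) without any modification of Kawada. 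Finally $\|E\|_2^2\leq(\max E)\sum E\ll YL^{O(1)}\cdot Y^2L^{-A'}$ by the trivial pointwise bound and \eqref{eq:eqfor5}. Replacing your progression-refinement of Kawada by this short-interval divisor estimate closes the gap.
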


Proof:
By Lemma \ref{lHM1}, for $a_{1}\leq X_1$ and since
$Q_{1}^{2/3}X_{1}^{1/3}\ll Y$, we deduce
from Kawada's Theorem \ref{th:kaw}:
\begin{align*}
  &\sum_{q_1\leq Q_1} \sum_{\substack{k_1\in [X_{1}, X_1+Y]\\ 2k_1\equiv a_1 \:(q_1)}}
  \sum_{q_2\leq Q_2} 
    \Big| \sum_{\substack{p_2+p_3=2k_1\\ p_2\equiv a_2\:(q_2)
      \\ p_2\in [X_{2}, X_2+Y]}}  \log p_2 \log p_3 
    - \mathfrak{S}(2k_1,q_{2},a_{2})Y \Big| \\
  \ll & Y^{1/2} L^{3/2} \Big(\sum_{k_1\in [X_{1},X_1+Y]}
 \Big(\sum_{q_2\leq Q_2}
 \Big| \sum_{\substack{p_2+p_3=2k_1\\ p_2\equiv a_2\:(q_2) \\ p_2\in [X_{2},X_2+Y]}}
   \log p_2 \log p_3 - \mathfrak{S} (2k_1,q_{2},a_{2})Y
      \Big| \Big)^{2} \Big)^{1/2} \\
  \ll & Y^{1/2} L^{3/2} \Big( YL^2 \sum_{k_1\in[X_{1},X_{1}+Y]}
   \sum_{q_2\leq Q_2}
 \Big| \sum_{\substack{p_2+p_3=2k_1\\ p_2\equiv a_2\:(q_2) \\
     p_2\in [X_{2}, X_2+Y]}}
   \log p_2 \log p_3 - \mathfrak{S} (2k_1,q_{2},a_{2})Y
      \Big| \Big)^{1/2} \\
  \ll & Y^{1/2} L^{3/2} (YL^2 Y^2 L^{-2A-5})^{1/2} \ll Y^2 L^{-A}
  \text{ by } \eqref{eq:eqfor5}.
\end{align*}

Note that for $Q_1 \ll Y^{1/2}$, we can apply Lemma \ref{lHM2}
in the same way 
to get an estimate which is uniform over the residues $a_{1}$; we get then
\begin{align*}
\sum_{q_1\leq Q_1} 
  \max_{a_1\:(q_1)}
  \sum_{\substack{k_1\in [X_{1},X_1+Y]\\ 2k_1\equiv a_1 \:(q_1) }}
  \sum_{q_2\leq Q_2}
\Big| \sum_{\substack{p_2+p_3=2k_1\\ p_2\equiv a_2\:(q_2) \\ p_2\in [X_{2}, X_2+Y]}}
   \log p_2 \log p_3 - \mathfrak{S} (2k_1,q_{2},a_{2})Y \Big| 
    \\ \ll Y^2 L^{-A}.
\end{align*}

So Theorem  \ref{th:binGBtwoprimes} follows. \qed


\zeile
Proof of Theorem \ref{th:1}: 

Write down the estimate of Theorem \ref{th:binGBtwoprimes},
but where the singular series in \eqref{eq:sing3} 
is replaced by the partial
sum for $s\leq L^C$. The estimate is still true since Theorem \ref{th:kaw}
is true in this form (see the treatment of $S_{2}$ in \cite[\S 6]{Kaw}).

Further, restrict the summation over $k_{1}$ to the $k_{1}$ of the
form $2k_1=n-p_1$ with $p_1\equiv a_1 \:(q_1)$, this gives then
\begin{multline*}
   \sum_{\substack{q_1\leq Q_{1} \\q_2\leq Q_{2}}}
   \sum_{\substack{p_1\equiv a_1\:(q_1)\\ p_1\in [X_{1}, X_{1}+Y]}}
 \Big| \sum_{\substack{p_2+p_3=n-p_{1}\\p_2\equiv a_2\:(q_2)\\ 
         p_2 \in [X_2,X_2+Y]}} \log p_2 \log p_3 
    -  \sum_{s\leq L^C} H_{s}(n-p_1,a_2,q_2)Y \Big|\\ \ll Y^2 L^{-A},
\end{multline*}
with $H_{s}$ as in \eqref{eq:sing3}.

Since we used Theorem \ref{th:binGBtwoprimes} for the $2k_{1}$-interval 
$[n-X_{1}-Y,n-X_{1}]$, the assumptions $n\geq X_{1}+X_{2}+2Y$, 
$Y\gg (n-X_{1})^{2/3+\varepsilon_{1}}$, $a_{1}\leq n-X_{1}-Y$,
$Q_{2}\ll YX_{2}^{-1/2}L^{-B}$ and $Q_{1}\ll Y^{3/2}(n-X_{1})^{-1/2}$ are
used (the latter one holds true since $n\ll X_{1}Y$, so 
$Q_{1}\ll Y^{3/2}X_{1}^{-1/2}Y^{-1/2}L^{-B}\ll Y^{3/2}(n-X_{1})^{-1/2}$).

In the previous estimate, we insert the weight $\log p_{1}$ and deduce that
\begin{align*}
   \sum_{\substack{q_1\leq Q_{1} \\ q_2\leq Q_{2}}}
   &\Big| \sum_{\substack{p_1+p_2+p_3=n\\p_i\equiv a_i\:(q_i), i=1,2\\ 
         p_i \in [X_i,X_i+Y_i], i=1,2}}
      \log p_1 \log p_2 \log p_3 \\
    &- \sum_{\substack{p_1\equiv a_1\:(q_1) \\ p_1\in [X_{1}, X_1+Y]}}
      \log p_1 \sum_{s \leq L^C} H_{s}(n-p_1,a_2,q_2)Y \Big| \ll Y^2 L^{-A}.
\end{align*}

Now the main term is
\begin{align*}
  &=Y \sum_{\substack{p_1\equiv a_1\:(q_1) \\ p_1\in [X_{1}, X_1+Y]}} \log p_1 \sum_{s\leq L^C}
   \frac{\mu(s)}{\ph(s)\ph([q_2;s])} \sideset{}{^{\displaystyle *}} \sum_{b\:(s)}
   e\,\Big( -(n-p_1)\frac{b}{s} \Big) 
    \sideset{}{^{\displaystyle *}} \sum_{\substack{c\:(s)\\ c\equiv a_2 ((q_2,s)) }} 
   e\,\Big(\frac{bc}{s} \Big)  \\
  &= Y  \sum_{\substack{p_1\equiv a_1\:(q_1) \\ p_1\in [X_1,X_1+Y]}} \log p_1 \sum_{s\leq L^C}
   \frac{\mu(s)}{\ph(s)\ph([q_2;s])} \sideset{}{^{\displaystyle *}} \sum_{b\:(s)}
      \sideset{}{^{\displaystyle *}} \sum_{\substack{c\:(s)\\ c\equiv a_2 ((q_2,s)) }} \: 
    \sideset{}{^{\displaystyle *}} 
    \sum_{\substack{d\:(s)\\ d\equiv a_1 ((q_1,s)) \\ d\equiv p_1 \:(s) }}
     e\,\Big( -(n-d-c)\frac{b}{s} \Big) \\
  &= Y \sum_{s\leq L^C} \frac{\mu(s)}{\ph(s)\ph([q_2;s])}
    \sideset{}{^{\displaystyle *}} \sum_{b\:(s)}
      \sideset{}{^{\displaystyle *}} \sum_{\substack{c\:(s)\\ c\equiv a_2 ((q_2,s)) }} \:
    \sideset{}{^{\displaystyle *}} \sum_{\substack{d\:(s)\\ d\equiv a_1 ((q_1,s)) }}
          e\,\Big( -(n-d-c)\frac{b}{s} \Big) 
    \sum_{\substack{ p_1\in [X_1,X_1+Y] \\ p_1\equiv a_1\:(q_1) \\ p_1\equiv d\:(s) }} \log p_1 \\
  &= Y^{2} \sum_{s\leq L^C}  
    \frac{\mu(s)G(n;a_1,q_1,a_2,q_2,s)}{\ph(s)\ph([q_2;s])\ph([q_1;s])} \\
    &\qquad + {\rm O} \Big(  Y\sum_{s\leq L^C} 
      \frac{\mu^2(s)}{\ph([q_2;s])} \;
     \sideset{}{^{\displaystyle *}}\max_{f([q_1;s])} 
    |\Delta(X_1,X_1+Y; [q_1;s], f)| \Big),
\end{align*}
with
\[
   G(n;a_1,q_1,a_2,q_2,s):=  \sideset{}{^{\displaystyle *}} \sum_{b\:(s)}
      \sideset{}{^{\displaystyle *}} \sum_{\substack{c\:(s)\\ c\equiv a_2 ((q_2,s)) }} \:
    \sideset{}{^{\displaystyle *}} \sum_{\substack{d\:(s)\\ d\equiv a_1 ((q_1,s)) }}
          e\,\Big( -(n-d-c)\frac{b}{s} \Big).
\]

The ${\rm O}$-term gives an admissible error due to Theorem \ref{pepisa}
(the result of Perelli, Pintz and Salerno \cite{PePiSa}). For this,
the assumptions $Q_{1}\ll YX_{1}^{-1/2}L^{-B}$ and
$X_{1}^{3/5+\varepsilon_{2}}\ll Y$ are used, cp.~also \cite[Sec.~2.1]{HalAMH}.

In the other term, the partial sum with $s\leq L^C$ can be replaced by the full
singular series, giving an admissible error. This can also be proved as
in \cite[Sec.~2.2]{HalAMH}, where we just have to set $q_3=1$.
In addition, there the singular series is obtained in Euler product form
being the term given here:

It equals $0$ if
$(q_1,q_2,n-(a_1+a_2))>1$, and in the notation of \cite{HalAMH}, 
it can be given in its Euler product form as
\[
   \frac{1}{2\ph(q_1)\ph(q_2)} \prod_{p,(A)\text{ or } (D)} 
   \Big(1-\frac{1}{(p-1)^2}\Big)
   \cdot \prod_{p,(B)} \Big(1+\frac{1}{(p-1)^3} \Big) \cdot
   \prod_{p,(C) \text{ or } (F)}  \frac{p}{p-1},
\]
where
\begin{align*}
  (A) &\Leftrightarrow p\mid n, p\nmid q_1, p\nmid q_2 
   \qquad (B) \Leftrightarrow p\nmid n, p\nmid q_1, p\nmid q_2 \\
  (C) &\Leftrightarrow p\mid n-a_1, p\mid q_1,
p\nmid q_2 \text{ or } p\mid n-a_2, p\mid q_2, p\nmid q_1 \\
   (D) &\Leftrightarrow p\nmid n-a_1,p\mid q_1,p\nmid q_2 
     \text{ or } p\nmid n-a_2,p\mid q_2,p\nmid q_1 \\
    (F) &\Leftrightarrow p\mid q_1, p\mid q_2, p\nmid n-(a_1+a_2).
\end{align*}

So we are done with Theorem \ref{th:1}.
\qed

\section{Proof of Theorem \ref{th:2}--Perelli/Pintz-approach}
\label{seceinAP}

We show: 

\begin{theorem}
\label{th:binGBoneprime}
Let $X_2\geq Y\gg X_{2}^{7/12+\varepsilon_{1}}$, 
let $Y^{1/3+\eps_{2}}\ll R \leq Y$, let $Q\ll R^{3/2}X_{1}^{-1/2}$
and $Y\asymp X_1-X_2-Y\geq 0$. Then, for any fixed positive
integer $a$ with $a\leq X_{1}$, we have
\[
  \sum_{q\leq Q}
    \sum_{\substack{k_1 \in [X_1,X_1+R]\\ 2k_1\equiv a\;(q)}} 
    \Big| \sum_{\substack{p_2+p_3=2k_1 \\ p_2 \in [X_{2},X_2+Y]}}
    \log p_2 \log p_3 -\mathfrak{S}(2k_1)Y   \Big| 
    \ll RY L^{-A},
\]
whereas for $Q\ll R^{1/2}$ instead of $Q\ll R^{3/2}X_{1}^{-1/2}$,
the estimate holds true with 
$\max_{a\:(q)}$ inserted after the sum over $q$.
\end{theorem}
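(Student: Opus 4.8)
The plan is to run exactly the Halász--Montgomery argument already used for Theorem \ref{th:binGBtwoprimes}, but with the short-interval adaptation of Perelli--Pintz, Theorem \ref{th:GBshortIV}, supplying the second-moment input in place of Kawada's Theorem \ref{th:kaw}. Abbreviate, for an integer $m$,
\[
  E(m):=\sum_{\substack{p_2+p_3=m\\ p_2\in[X_2,X_2+Y]}}\log p_2\log p_3-\mathfrak{S}(m)Y ,
\]
and put $E(m):=0$ when $m$ is odd. The crucial simple manoeuvre is to switch to the variable $m=2k_1$: for fixed $a$ and fixed $q$ the inner double sum over $k_1$ becomes $\sum_{m\in(2X_1,2X_1+2R],\,m\equiv a\,(q)}E(m)$, i.e.\ a sum of $v_m:=E(m)$ over the \emph{single} residue class $a\bmod q$, which is precisely the shape required by Lemma \ref{lHM1} (and the parity of $m$, hence solvability, is taken care of automatically by $v_m=0$ for odd $m$). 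So first I would split $q\le Q$ dyadically into $\ll L$ ranges $q\sim Q'$ and, for each block, apply Lemma \ref{lHM1} with $M=2X_1$, $N=2R$; here $a\le X_1\le M$, $Q'\le Q\le M$, $N\le M$ (since $R\le Y\le X_2\le X_1$), and $\log(M+1)\ll L$ because $Y\gg X_2^{7/12+\eps_1}$ forces $X_1\ll X_2+Y\ll Y^{12/7}$. This gives, for each dyadic block,
\[
 \sum_{q\sim Q'}\Big|\sum_{\substack{k_1\in[X_1,X_1+R]\\ 2k_1\equiv a\:(q)}}E(2k_1)\Big|\ll\big(R+Q'^{2/3}X_1^{1/3}\big)^{1/2}L^{3/2}\Big(\sum_{k_1\in[X_1,X_1+R]}|E(2k_1)|^2\Big)^{1/2}.
\]

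Next I would note that the hypothesis $Q\ll R^{3/2}X_1^{-1/2}$ is exactly the one making $Q'^{2/3}X_1^{1/3}\ll Q^{2/3}X_1^{1/3}\ll R$, so the first factor is $\ll R^{1/2}$. The second factor is handled by Theorem \ref{th:GBshortIV}: its hypotheses $Y^{1/3+\eps}\ll R\ll Y$, $R\le X_1$ and $X_2^{7/12+\eps}\ll Y\le X_2$ follow at once from ours, and the assumption relating $X_1,X_2,Y$ is precisely the one under which the interval condition for $p_3=2k_1-p_2$ needed in the Gallagher / Saffari--Vaughan step of that theorem holds; applying it with $A$ replaced by a suitable $A'=A'(A)$ yields $\sum_{k_1\in[X_1,X_1+R]}|E(2k_1)|^2\ll RY^2L^{-A'}$. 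Combining, each block contributes $\ll R^{1/2}\cdot L^{3/2}\cdot R^{1/2}YL^{-A'/2}=RYL^{3/2-A'/2}$, and summing over the $\ll L$ blocks and taking $A'$ large (say $A'=2A+5$) gives the asserted bound $\ll RYL^{-A}$. Note that, in contrast to the proof of Theorem \ref{th:1}, no truncation of the singular series is needed here, since the main term $\mathfrak{S}(2k_1)Y$ already agrees with the one in Theorem \ref{th:GBshortIV}.

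For the version uniform in $a$, I would repeat the argument verbatim with Lemma \ref{lHM2} in place of Lemma \ref{lHM1} (which needs no fixed residue at all): after the same substitution $m=2k_1$ the quantity with $\max_{a\:(q)}$ inserted is $\ll(R\log(Q'+1)+Q'^2)^{1/2}(\sum_{k_1}|E(2k_1)|^2)^{1/2}$ per dyadic block, and now $Q\ll R^{1/2}$ makes $Q'^2\ll R$, so the first factor is again $\ll R^{1/2}L^{1/2}$; the rest of the estimate is identical.

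The only genuinely hard ingredient is Theorem \ref{th:GBshortIV} itself, which is quoted; within the present proof the point that needs care is purely the bookkeeping — verifying that the hypotheses of Theorem \ref{th:GBshortIV} really are implied by ours (in particular the interval-length condition for $p_3$ and the admissible ranges $Y\gg X_2^{7/12+\eps}$, $R\gg Y^{1/3+\eps}$), and checking that the two Halász--Montgomery thresholds $Q\ll R^{3/2}X_1^{-1/2}$ and $Q\ll R^{1/2}$ are exactly those that collapse the first factor to $R^{1/2}$ in the non-uniform and uniform cases respectively.
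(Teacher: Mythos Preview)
Your approach is exactly the paper's: apply Lemma~\ref{lHM1} (resp.\ Lemma~\ref{lHM2} for the uniform version) and feed in the $\ell^2$-bound from Theorem~\ref{th:GBshortIV}, noting that $Q\ll R^{3/2}X_1^{-1/2}$ (resp.\ $Q\ll R^{1/2}$) is precisely the condition that collapses the first Hal\'asz--Montgomery factor to $R^{1/2}$.

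One slip worth fixing: as written you bound $\sum_{q}\bigl|\sum_{k_1}E(2k_1)\bigr|$, but the theorem asks for $\sum_{q}\sum_{k_1}|E(2k_1)|$ with the absolute value \emph{inside} the $k_1$-sum. The remedy is immediate---take $v_m:=|E(m)|$ rather than $v_m:=E(m)$ when invoking Lemma~\ref{lHM1} (and likewise Lemma~\ref{lHM2}); then $\bigl|\sum_{m\equiv a(q)} v_m\bigr|=\sum_{m\equiv a(q)}|E(m)|$ is exactly the desired quantity, while $\sum|v_m|^2=\sum|E(m)|^2$ is unchanged, so the rest of your argument goes through verbatim.
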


Proof:
We start with the following estimate from
Theorem \ref{th:GBshortIV}.

By Lemma \ref{lHM1}, for fixed $a\leq X_1$
and since $Q^{2/3}X_1^{1/3}\ll R$, we have

\begin{align}
\label{th3var}
   &\sum_{q\leq Q} \sum_{\substack{k_1 \in [X_1,X_1+R] \\ 2k_1\equiv a\:(q) }}
   \Big| \sum_{\substack{ p_2+p_3=2k_1 \\ p_2 \in [X_2,X_2+Y]}} \log p_2 \log p_3 
    - \mathfrak{S}(2k_1)Y \Big| \notag \\ 
   \ll &R^{1/2} L^{3/2} \Big(\sum_{k_1 \in [X_1,X_1+R]} 
   \Big| \sum_{\substack{ p_2+p_3=2k_1 \\ p_2 \in [X_2,X_2+Y]}} \log p_2 \log p_3 
    - \mathfrak{S}(2k_1)Y \Big|^2 \Big)^{1/2}.
\end{align}

By Theorem \ref{th:GBshortIV} above, the expression in brackets is
$\ll R Y^{2} L^{-A-3}$, and we get the desired estimate 
$RY L^{-A}$ for the left hand side in Theorem \ref{th:binGBoneprime}. 

Note that for $Q\ll R^{1/2}$, we can apply
Lemma \ref{lHM2}  in order to get
an estimate being uniform for all residues $a$, namely 
in the same way we get 
\begin{equation*}
   \sum_{q\leq Q} 
    \max_{a\:(q)}
    \sum_{\substack{k_1 \in [X_1,X_1+R] \\ 2k_1\equiv a\:(q) }}
    \Big| \sum_{\substack{ p_2+p_3=2k_1 \\ p_2 \in [X_2,X_2+Y]}} \log p_2 \log p_3 
    - \mathfrak{S}(2k_1)Y \Big| \ll  RY L^{-A}.
\end{equation*}
\qed

Proof of Theorem \ref{th:2}:
Let us write $Y_{1}$ for $R$ and $Y_{2}$ for $Y$.
The proof now follows the same idea as in the proof of Theorem \ref{th:1}:
Use estimate \eqref{th3var} with $2k_1$ of the form
$2k_1=n-p_1$ and multiply with the weight $\log p_1$.
The necessary conditions for using \eqref{th3var} have been formulated
in Theorem \ref{th:2}.

But we use \eqref{th3var} with the singular series $\mathfrak{S}(2k_1)$
in the main term replaced by its partial sum for $s\leq L^C$ 
(cp. \eqref{eq:sing4}).  
This is possible, the contribution of the series for 
$s>L^{C}$ gives an admissible error, as shown in \cite[p.~45]{PePi}.

So we deduce the estimate
 \begin{gather}
   \label{eq:h23}
  \sum_{q\leq Q} 
  \Big| \sum_{\substack{ p_1+p_2+p_3=n \\ p_i \in [X_i,X_i+Y_i], i=1,2
 \\ p_1\equiv a\:(q)}}\log p_1 \log p_2 \log p_3  \notag \\
   - Y_{2} \sum_{\substack{ p_1\in [X_{1},X_1+Y_{1}]\\ p_1\equiv a\:(q) }}
     \log p_1 \sum_{s\leq L^C} \frac{\mu^2(s)}{\ph^2(s)} 
      \sideset{}{^{\displaystyle *}}\sum_{b\:(s)}\,e\Big(-(n-p_1)\frac{b}{s}\Big) 
     \Big| \ll Y_{1} Y_{2} L^{-A}.
 \end{gather}
Here, the main term is
\begin{align*}
    &Y_{2} \sum_{\substack{ p_1\in [X_{1}, X_1+Y_{1}]\\ p_1\equiv a\:(q) }}
     \log p_1 \sum_{s\leq L^C} \frac{\mu^2(s)}{\ph^2(s)} 
      \sideset{}{^{\displaystyle *}}\sum_{b\:(s)} 
       \sideset{}{^{\displaystyle *}}
      \sum_{\substack{c\:(s)\\c\equiv a\:((q,s))\\ c\equiv p_1 \:(s)}}
       \,e\Big(-(n-c)\frac{b}{s}\Big) \\ 
    &= Y_{2} \sum_{s\leq L^C} \frac{\mu^2(s)}{\ph^2(s)} 
      \sideset{}{^{\displaystyle *}}\sum_{b\:(s)} 
       \sideset{}{^{\displaystyle *}}\sum_{\substack{c\:(s)\\c\equiv a\:((q,s))}}
       \sum_{\substack{p_1\in [X_{1}, X_1+Y_{1}]\\ p_1\equiv a\:(q) \\
           p_1\equiv c\: (s) }} \log p_1 \:e\Big(-(n-c)\frac{b}{s}\Big) \\
    &= \sum_{s\leq L^C} \frac{\mu^2(s)}{\ph^2(s)} F(n;a,q,s)
       \frac{Y_{1}Y_{2}}{\ph([q;s])} \\
       &\qquad + {\rm O} \Big(  Y_{2} \sum_{s\leq L^C}
           \frac{\mu^2(s)}{\ph(s)} \; 
       \sideset{}{^{\displaystyle *}}\max_{f \,([q;s])}| 
      \Delta(X_1,X_1+Y_{1};[q;s],f)|\Big),
\end{align*}
where \[F(n;a,q,s):=\sideset{}{^{\displaystyle *}}\sum_{b\:(s)} 
  \sideset{}{^{\displaystyle *}}\sum_{\substack{c\:(s)\\c\equiv a\:((q,s))}}
   \:e\Big(-(n-c)\frac{b}{s}\Big) \]
and where we used $(q,a)=1$ in the last step, what we can assume
w.l.o.g., else \eqref{eq:h23} holds true clearly. 
Note that the version of \eqref{eq:h23} can be shown with 
$\max_{a\:(q)}$ inserted after the sum over $q$ 
by the use of the supplement of Theorem \ref{th:binGBoneprime}
coming from Lemma \ref{lHM2}.

The ${\rm O}$-term is $\ll Y_{1}Y_{2} L^{-A}$ and therefore admissible 
since we may apply Theorem \ref{pepisa}
for $Y_{1}\gg X_1^{3/5+\eps}$ and $Q\ll Y_{1}X_{1}^{-1/2}L^{-B}$.

The error that comes now from the replacement of the partial sum
$\sum_{s\leq L^C}$ by the  
full singular series can be estimated to be admissible in exactly the
same way as in 
\cite{HalAMH}, Section 2.2: there, let $q_1=q,a_1=a, q_2=q_3=1$ and the same 
proof applies here, too. And since the singular series sums up to the given one
$\mathfrak{T}(n,q,a)$, we are done.

Further, the same proof with Lemma \ref{lHM2} instead of Lemma \ref{lHM1}
 gives the supplement of the theorem, 
and $Q\ll Y_{1}^{1/2}$ is needed then as assumption.
\qed


\section{Proof of the Corollaries
 with sieve methods}
\label{secdrei}

For the proof of these Corollaries we proceed as in the proof
of Meng \cite{Meng}.
This works in exactly the same way for Corollaries \ref{cor:2} 
and \ref{cor:3}.
We give an indication of these proofs now. They rely on the
application of Theorem 9.3 in \cite{HR}, there 
$\Lambda_{s}:=s+1-\frac{\log (4/(1+3^{-s}))}{\log 3}$ with a natural
number $s$, and it is assumed that we sieve for a
finite set $\mathcal{A}$, where $\xi$ serves as an approximation
for $\#\mathcal{A}$ (this corresponds $X$ in \cite{HR}).

\zeile
\textbf{Proof of Corollary \ref{cor:2}.}

Here we work with the sequence $\mathcal{A}$ of all $p_2+2$
with $k_1= p_2+p_3$ such that
$p_2$ lies in the short interval $p_2 \in [X_2,X_2+Y]$ with $Y=X_2^{\theta}$,
and $k_1$ lies in the short interval $k_1\in [X_{1}, X_1+Y]$
with $Y\gg X_1^{2/3+\eps}$. We have $\xi\gg YL^{-2}$.
We use then Theorem 9.3 in \cite{HR}, its conditions 
can be checked in the same way as in
\cite{Meng}, where for (d) we have to apply Theorem \ref{th:binGBtwoprimes}
in the version \eqref{eq:Th4var} below
with $Q_1=1$ instead. This works with $\alpha=1-1/2\theta$, and 
$|a|\leq \xi^{\alpha (\Lambda_s-\delta)}$ (this is Condition (9.3.6) in
\cite{HR}) holds if 
$1/\theta < (1-1/2\theta)\Lambda_s$.  
This is true for $s=3$ since $\Lambda_{3}\geq 2.771$,
and $\theta\geq 0.861$. \qed


\zeile
\textbf{Proof of Corollary \ref{cor:4}.}

Consider $X_{i},Y_{i}$, $i=1,2$, and $n$ as given.
Now we work with the sequence $\mathcal{A}$ of all $p_2+2$
with $n= p_{1}+ p_2+p_3$ such that
$p_i$ lies in the short interval $p_i\in [X_{i},X_i+Y_{i}]$, $i=1,2$.
We have $\xi\gg Y_{1}L^{-3}$. Write $Y_{1}=Y_{2}^{\eta}$ with
$\frac{1}{3}<\eta\leq 1$.
Again Theorem 9.3 in \cite{HR} applies with 
$\alpha=\min(1-\frac{1}{2\theta_{1}},\frac{3}{2}-\frac{1}{2\eta})$,
this time with Theorem \ref{th:2}.
Now since $|a|\leq X_{1}+Y_{1}+2\ll X_{1}\ll Y_{1}^{1/\theta_{1}}$,
the condition $|a|\leq \xi^{\alpha (\Lambda_s-\delta)}$ is true if
$1<\theta_{1}\alpha \Lambda_{s}$. 

First case: If
$1-\frac{1}{2\theta_{1}}<\frac{3}{2}-\frac{1}{2\eta}$,
we have as in Corollary \ref{cor:2} before $s=3$, $\theta_{1}\geq
0.861$, then we have to choose 
$\eta>(1+1/\theta_{1})^{-1}\geq 0.463$.
Second case: Here $\eta$ has to be $\eta\leq 0.5$, so that $\theta_{1}\leq
1$ can be such that $1/\theta_{1} \leq 1/\eta-1$. 
Also $\alpha \Lambda_{s}=(3/2-1/2\eta) \Lambda_{s}>1$, what gives 
$\eta>0.439$ for $s=3$. So with $\eta$ in this range, one can choose
$\theta_{1}$ such that
$1/\theta_{1}<\min(1/\eta-1,(3/2-1/2\eta)\Lambda_{3})$;
we will have then that $\theta_{1}>0.782$.
\qed

\zeile
Now Corollary \ref{cor:3} is a consequence of Corollary \ref{cor:2}:

\zeile
\textbf{Proof of Corollary \ref{cor:3}.}

Consider the number of $n-p_1$ such that $p_1+2=P_2$ and $p_1\in [X_{1},X_1+Y]$.
By Wu's Theorem in \cite{Wu}, 
we know that this number is $\gg Y L^{-2}$ if $Y=X_1^{\theta_1}$ 
for $\theta_1\geq  0.971$.
So not all of them can be exceptions in Corollary \ref{cor:2},
so there is at least one of them being the sum of two primes $p_2$ and
$p_3$, where 
$p_2$ lies in a short interval of length $Y=X_2^{\theta_2}$ with
$\theta_2\geq 0.861$, 
such that $p_2+2=P_3$. Note that Corollary \ref{cor:2} is applicable
since we assumed that $X_1+X_2+2Y\leq n\ll Y^{3/2-\eps}$, so for the lower
bound $n-X_1-Y$ of the numbers $n-p_1$
we have $X_2+Y\leq n-X_1-Y \ll Y^{3/2-\eps}$.
\qed


\zeile\zeile
Now for Corollary~\ref{cor:1} we have to work
in a slightly different style; therefore we here give
the proof of Corollary~\ref{cor:1} in detail.
As sieve method we need Theorem 10.3 of Halberstam 
and Richert \cite{HR}, which we present first: For this
let $\mathcal{A}$  be a finite sequence of integers, $\mathcal{P}$
an infinite set of primes, and  
$\mathcal{A}_{d}$ the sequence of all $a\in\mathcal{A}$ with $d\mid a$.
Further, for the number of elements in $\mathcal{A}_d$ we write
\[ \#\mathcal{A}_{d}=\frac{\omega(d)}{d}\xi + R_{d}  \]
with a multiplicative arithmetic function $\omega$ such that
$\omega(p)=0$ for $p\not\in\mathcal{P}$. Let $L:=\log \xi$,
$\xi\geq 2$. 
We assume that $(a,p)=1$ for any prime $p\not\in\mathcal{P}$ and
any $a\in\mathcal{A}$.

\begin{theorem}
\label{sth} (Theorem 10.3 of \cite{HR})
Assume that 
\begin{enumerate}[(a)]
\item there exist a constant $A_{1}>0$ such that
    \[
         1\leq \frac{1}{1-\frac{\omega(p)}{p}} \leq A_{1}
     \]
for all $p\in\mathcal{P}$,
\item for a constant $\kappa>1$ (the sieve dimension), a constant $A_{2}\geq 1$ 
 and for all real $v,w$ with $2\leq v \leq w$ we have
     \[
        \sum_{\substack{v\leq p\leq w \\
              p\in\mathcal{P}}}  \frac{\omega(p)}{p} \log p 
            \leq \kappa \log \frac{w}{v} + A_{2},
    \]
\item for a constant $A_{3}>0$ and for all real $z,y$ with
$2\leq z\leq y\leq \xi$ we have
    \[
         \sum_{\substack{ z\leq p<y \\ p\in\mathcal{P} }}
         \#\mathcal{A}_{p^{2}} \leq A_{3} \Big(\frac{\xi L}{z}+y
         \Big),
    \] 
(Any fixed power of $L$ is here possible, too, as remarked
by Halberstam and Richert \cite{HR}),
\item for constants $0<\alpha<1$ and $A_{4},A_{5}>0$ we have
   \[
      \sum_{d<\frac{\xi^{\alpha}}{L^{A_{4}}}} \mu^{2}(d) 3^{\nu(d)}
      |R_{d}| \leq A_{5} \frac{\xi}{L^{\kappa+1}}.
  \]  
\end{enumerate}
Further assume that there exists a real $\mu>0$ such that $|a|\leq
\xi^{\alpha \mu}$ for all $a\in\mathcal{A}$. Let $\zeta\in\R$,
$0<\zeta<\nu_{\kappa}$ for a certain real $\nu_{\kappa}>1$ depending on
$\kappa$ only, and let $r\in\N$ with
\begin{equation}
\label{eq:label5}
    r> (1+\zeta)\mu-1+(\kappa+\zeta)\log\frac{\nu_{\kappa}}{\zeta}
   - \kappa - \zeta\frac{\mu-\kappa}{\nu_{\kappa}}.
\end{equation}
Then there exists a $\delta=\delta(r,\mu,\kappa,\zeta)>0$ with
\begin{equation}
\label{eq:sthestimate}
    |\{ P_{r};\, P_{r}\in \mathcal{A} \}| \geq \delta
   \frac{\xi}{(\log \xi)^{\kappa}}\Big(1-\frac{C}{\sqrt{\log X}}\Big)
\end{equation}
for a constant $C>0$ depending at most on $r,\mu,\zeta$
(as well as on the $A_{i}$'s, $\kappa$ and $\alpha$).

\end{theorem}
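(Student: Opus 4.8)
This is the Richert weighted sieve for almost-primes, and my plan is to prove it by the classical weighted-sieve argument, following Chapters~8--10 of \cite{HR}. Write $P(z):=\prod_{p<z,\,p\in\mathcal{P}}p$ and let $S(\mathcal{B},\mathcal{P},z):=\#\{b\in\mathcal{B}:(b,P(z))=1\}$ be the sifting function. I would introduce two auxiliary parameters: a sifting level $z$ and a cut-off $y:=\xi^{\alpha}L^{-A_{4}}$, i.e.\ exactly the level of distribution furnished by hypothesis~(d), arranged so that $z=y^{1/\nu_{\kappa}}$ (hence the main sieve variable $\log y/\log z$ equals $\nu_{\kappa}$). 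Instead of counting $P_{r}$'s directly, I would study the weighted sum
\[
  W:=\sum_{\substack{a\in\mathcal{A}\\(a,P(z))=1}}\Big(1-\frac{1}{1+\zeta}
  \sum_{\substack{z\le p<y,\ p\in\mathcal{P}\\p\mid a}}\Big(1-\frac{\log p}{\log y}\Big)\Big),
\]
with Richert's logarithmic weights, normalised through $\zeta$.

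First I would carry out the combinatorial step: checking that any $a\in\mathcal{A}$ contributing a \emph{positive} term to $W$ and squarefree on $[z,y)$ has at most $r$ prime factors as soon as $r$ exceeds the right-hand side of \eqref{eq:label5}. Such an $a$ has all prime factors $\ge z$; the size constraint $|a|\le\xi^{\alpha\mu}$ caps the number of its prime factors that are $\ge y$ (each costs $\ge\log y\approx\alpha\log\xi$ in $\log|a|$), while positivity of the weight caps the number in $[z,y)$, and balancing these two counts against $\log z=(\log y)/\nu_{\kappa}\approx(\alpha/\nu_{\kappa})\log\xi$ — with $\zeta$ and the dimension $\kappa$ calibrated exactly as needed to match the $\kappa$-dimensional sieve in the next step — produces precisely the inequality \eqref{eq:label5}. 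The $a$ divisible by $p^{2}$ for some $p\in[z,y)$ I would excise separately: their number is $\ll\sum_{z\le p<y}\#\mathcal{A}_{p^{2}}\ll\xi L/z+y$ by hypothesis~(c), which is $o(\xi/L^{\kappa})$, so their contribution to $W$ is negligible.

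Next I would bound $W$ from below. Writing
\[
  W=S(\mathcal{A},\mathcal{P},z)-\frac{1}{1+\zeta}
  \sum_{\substack{z\le p<y\\p\in\mathcal{P}}}\Big(1-\frac{\log p}{\log y}\Big)\,S(\mathcal{A}_{p},\mathcal{P},z),
\]
I would estimate $S(\mathcal{A},\mathcal{P},z)$ from below and each $S(\mathcal{A}_{p},\mathcal{P},z)$ from above by the Jurkat--Richert theorem for the $\kappa$-dimensional sieve, whose inputs are precisely hypotheses~(a) and~(b). Via Mertens-type estimates derived from (a) and (b), the main terms are $\gg\xi/(\log\xi)^{\kappa}$ times explicit expressions in the sieve functions $f_{\kappa},F_{\kappa}$ evaluated at arguments $\le\nu_{\kappa}$; the role of the constants $\nu_{\kappa}>1$ (the near-optimal sieve parameter in dimension $\kappa$) and $\zeta\in(0,\nu_{\kappa})$ is to make the $f_{\kappa}$-contribution strictly outweigh the $F_{\kappa}$-contribution, giving $W\ge\delta\,\xi/(\log\xi)^{\kappa}$ for some $\delta=\delta(r,\mu,\kappa,\zeta)>0$. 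The secondary factor $1-C/\sqrt{\log X}$ merely records the lower-order error terms of the Jurkat--Richert estimates (and since $\log X\asymp\log\xi$ in all intended applications, using $\log X$ there costs nothing). As every positive contribution to $W$ comes from a $P_{r}$, the estimate \eqref{eq:sthestimate} follows.

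The hard part, I expect, will be the control of the error terms — and this is precisely what fixes the shape of hypothesis~(d). Both Jurkat--Richert estimates are assembled from upper/lower bound sieve weights supported on squarefree $d<\xi^{\alpha}L^{-A_{4}}$; inside the sifting of $\mathcal{A}_{p}$ the weight convolves with the extra prime $p$, so the total remainder has the form $\sum_{d<\xi^{\alpha}L^{-A_{4}}}c_{d}R_{d}$ with $|c_{d}|\le\mu^{2}(d)\,3^{\nu(d)}$, and hypothesis~(d) asserts exactly that this is $\ll\xi/L^{\kappa+1}$, negligible against the main term $\gg\xi/L^{\kappa}$. Verifying that the $3^{\nu(d)}$ majorant survives the convolution of the two weight sequences, and that the level $\xi^{\alpha}L^{-A_{4}}$ is propagated correctly through it, is the technical heart of the proof; everything else is the standard weighted-sieve machinery, for whose full execution I would defer to \cite[Ch.~8--10]{HR}.
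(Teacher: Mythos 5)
The paper gives no proof of this statement; it is quoted verbatim as Theorem~10.3 of Halberstam and Richert \cite{HR}, and the paper's ``proof'' is simply the citation. Your outline of Richert's weighted-sieve argument is a reasonable sketch of how \cite{HR} actually proves it, and since you too ultimately defer to \cite[Ch.~8--10]{HR} for the execution, you are relying on the same source the paper relies on; there is no substantive difference to report.
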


\zeile
We use this theorem in the case $\kappa=2$, 
for which the numerical value $\nu_{\kappa}=4.42\dots$ is known
(see \cite[(7.4.9)]{HR}).

We are going to apply Theorem \ref{th:binGBtwoprimes} and Theorem
\ref{th:1}, 
but we need them in non-weighted version. With an obvious partial
summation, we can 
transform the first estimate in Theorem \ref{th:binGBtwoprimes} into
\begin{align}
\label{eq:Th4var}
  \sum_{\substack{q_1\leq Q_1 \\q_2\leq Q_2}}
   \sum_{\substack{k_1 \in [X_1,X_1+R]\\ 2k_1\equiv a_1 \:(q_1)}}
   \Big|  \sum_{\substack{p_2+p_3=2k_1 \\ p_2 \equiv a_2 \:(q_2) \\
       p_2 \in [X_2,X_2+Y]}} 1 - \mathfrak{S}(2k_1,q_2,a_2) 
     \int_{X_2}^{X_2+Y} \frac{dt}{\log t \log (k_1-t) } \Big|\notag \\
     \ll Y^2 L^{-A}. 
\end{align}

A non-weighted version of the estimate in Theorem
\ref{th:1} is the following: 
\begin{align}
\label{eq:Cor4var}
   \sum_{\substack{q_1\leq Q_1\\ q_{2}\leq Q_{2}}} \Big|
       \sum_{\substack{p_1+p_2+p_3=n  \\ p_i \in [X_i,X_i+Y_i], i=1,2 \\
           p_i \equiv a_i \:(q_i), i=1,2 }} 1 
      - \mathfrak{T}(n,q_1,a_1,q_2,a_2)H(X_1,X_2,Y,n) \Big| \notag\\
   \ll Y^2 L^{-A},
\end{align}
where 
\begin{equation}
\label{eq:Ha}
   H(X_1,X_2,Y,n):= \int_{X_1}^{X_1+Y} \frac{1}{\log v} 
          \int_{X_2}^{X_2+Y} \frac{dt}{\log t \log (n-v-t)} dv.
\end{equation}
This version can be obtained in the same way as the original version
of the Theorem,
but where estimate \eqref{eq:Th4var} and the $\pi$-version of the Theorem
of Perelli, Pintz and Salerno \cite{PePiSa} is used, namely
\[ 
   \sum_{q\leq Q}  \sideset{}{^{\displaystyle *}} \max_{a\:(q)}
    \max_{h\leq Y} \Big|\sum_{\substack{p\in[X,X+h]\\p\equiv a\:(q)}} 1
    - \frac{1}{\varphi(q)} \int_X^{X+h} \frac{dt}{\log t} \Big| \ll YL^{-A}.
\]
This version can be gained from the original one in the same way like 
Bombieri-Vinogradov's theorem can be transformed from a $\psi$-version 
into a $\pi$-version, as done in \cite{Brue}.

\zeile
\textbf{Proof of Corollary \ref{cor:1}.} 

To prepare the application of Theorem \ref{sth}, we set
$\mathcal{P}:=\{ p;\, p\neq 2 \}$ and for large odd $n\not\equiv 1\:(6)$, 
where $n\geq X_1+X_2+2Y$. Let $\mathcal{A}$
denote the sequence of all $(p_1+2)(p_2+2)$
such that $p_1+p_2+p_3=n$  and $p_i \in [X_i,X_i+Y]$ for $i=1,2$ is solvable.
So this sequence assigns the number $(p_1+2)(p_2+2)$ to each pair
$(p_1,p_2)$, and these pairs may be given in some order.

Then $(a,2)=1$ holds for all
$a\in\mathcal{A}$. Let $\mathcal{A}_d$ denote the sequence of all elements 
$a\in \mathcal{A}$ with $d\mid a$. 

Now
\begin{align*}
   |\mathcal{A}_d| &= 
\sum_{\substack{p_i \in [X_i,X_i+Y], i=1,2\\ p_1+p_2+p_3=n\\
 (p_1+2)(p_2+2)\equiv 0\: (d) }}
    1  = \sum_{t\mid d}   \sum_{\substack{p_i \in [X_i,X_i+Y], i=1,2\\ 
        p_1+p_2+p_3=n\\ p_1\equiv -2 \:(t) \\p_2\equiv -2\: (d/t)\\
        ((p_{1}+2)/t,d/t)=1 }} 1\\
       & =\sum_{t\mid d}  \sum_{\substack{p_i \in [X_i,X_i+Y], i=1,2\\ 
        p_1+p_2+p_3=n\\ p_1\equiv -2 \:(t) \\p_2\equiv -2\: (d/t)}}\;
        \sum_{\substack{s\mid (p_{1}+2)/t,\\ s\mid d/t}} \mu(s) 
       = \sum_{t\mid d} \sum_{s\mid d/t} \mu(s) \sum_{\substack{p_i
           \in [X_i,X_i+Y], i=1,2\\  
          p_1+p_2+p_3=n\\ p_1\equiv -2 \:(st) \\p_2\equiv -2\: (d/t)}} 1,
\end{align*}
which we want to approximate by
\[
\sum_{t\mid d} \sum_{s\mid d/t} \mu(s) 
    \mathfrak{T}(n,st,-2,d/t,-2) H(X_1,X_2,Y,n).
\]
So in view of Theorem \ref{sth},
we write $|\mathcal{A}_d|= \frac{\omega(d)}{d}\xi + R_d$ with
\[
   \xi:=\frac{1}{2} H(X_1,X_2,Y,n) \prod_{p\mid n} \Big(1-\frac{1}{(p-1)^2}\Big)
     \prod_{p\nmid n} \Big(1+\frac{1}{(p-1)^3}\Big)
\]
and
\begin{align}
\label{eq:omegad}
   \omega(d):= d  \sum_{t\mid d} \sum_{s\mid d/t} &\frac{\mu(s)}{\ph(st)\ph(d/t)}
   \prod_{\substack{p\nmid n+2,p\mid st, p\nmid d/t \\ 
   \text{or } p\nmid n+2,p\nmid st, p\mid d/t } }
   \Big(1-\frac{1}{(p-1)^2}\Big) \prod_{p\mid n,p\mid ds} 
  \Big(1-\frac{1}{(p-1)^2}\Big)^{-1} 
  \notag   \\ &\cdot\prod_{p\nmid n,p \mid ds} \Big(1+\frac{1}{(p-1)^3}\Big)^{-1} 
   \prod_{\substack{p\mid n+2, p\mid st, p\nmid d/t \\
     \text{or } p\mid n+2,p\mid d/t, p\nmid st \\ 
     \text{or }  p \nmid n+4, p\mid st,p\mid d/t }}
    \Big(1+\frac{1}{p-1}\Big).
\end{align}

From this formula it can be seen that $\omega$ is multiplicative in $d$.

Now we are going to check all conditions of Theorem \ref{sth}
in this setting.

By a computation, from \eqref{eq:omegad} we can deduce that, 
for a prime $\ell\neq 2$ we have:
\[
    \omega(\ell)=
    \begin{cases}
      \frac{2\ell}{\ell-2}-\frac{1}{\ell-1},
   &\text{if } \ell\mid n,\ \ell\nmid n+2,\ \ell \nmid n+4,\\[1ex]
      \frac{\ell^{2}(2\ell-3)}{(\ell-1)^{3}+1}, 
      &\text{if } \ell \nmid n,\ \ell\mid n+2,\ \ell\nmid n+4,\\[1ex]
      \ell\frac{2(\ell-1)^{2}-1-\ell}{(\ell-1)^3+1}, &\text{if } \ell\nmid n,\ 
     \ell\nmid n+2,\ \ell \mid n+4,\\[1ex]
      \ell\frac{2(\ell-1)^{2}-2-\ell}{(\ell-1)^{3}+1},
      &\text{if } \ell\nmid n,\ 
     \ell\nmid n+2,\ \ell\nmid n+4.
    \end{cases}
\]

So it follows that (a) holds (note that the second case, where
$\omega(3)/3=1$, does not occur for $\ell=3$ since $n\not\equiv 1 \:(6)$).

\zeile
For condition (b) we see that for all $w\geq v\geq 2$, we have
\[
  \sum_{v\leq p\leq w} \frac{\omega(p)}{p} \log p
    \leq 2\sum_{v\leq p\leq w} \frac{\log p}{p-2} 
   \leq 2\log\frac{w}{v} +  A_2
\]
for a $A_2\geq 1$ since
\[
   \sum_{p\leq x} \frac{\log p}{p-2}
    = \sum_{p\leq x} \frac{\log p}{p} + 2 \sum_{p\leq x} \frac{\log p}{(p-2)p} 
    = \log x + {\rm O}(1).
\]
So $\kappa=2$ works as sieve dimension.

\zeile
For condition (c) we see that
$|\mathcal{A}_{p^2}|\ll \frac{Y^2}{p^2}+1$, so

\[
  \sum_{z\leq p<y} |\mathcal{A}_{p^{2}}| \ll \sum_{z\leq q<y}
  \Big(\frac{Y^2}{q^{2}} +1\Big) \ll \frac{Y^2}{z}\log y +y \ll
  \frac{\xi}{z}L^{4} + y.
\]

\zeile
We are going to check now condition (d). Let
$D=Y^{1-1/2\theta}L^{-B}=\xi^\alpha L^{-B}$,
where $\alpha=\frac{1}{2}-\frac{1}{4\theta}$
and $\theta=\min(\theta_1,\theta_2)$. 
Abbreviate
\[
A(q_{1},q_{2}):=
   \sum_{\substack{p_i
          \in [X_i,X_i+Y], i=1,2\\  
          p_1+p_2+p_3=n\\ p_1\equiv -2 \:(q_{1}) \\p_2\equiv -2\:
          (q_{2})}} 1 -\mathfrak{T}(n,q_{1},-2,q_{2},-2)H(X_{1},X_{2},Y,n).
\]  
Then we have
\begin{align*}
  \sum_{d\sim D} &\mu^{2}(d) 3^{\nu(d)} |R_{d}| 
   =  \sum_{\substack{d\sim D\\(d,2)=1}} \mu^2(d) 3^{\nu(d)}
       \Big| \sum_{t\mid d} \sum_{s\mid d/t} \mu(s)A(st,d/t)\Big|\\
    &\ll  \sum_{\substack{d\sim D\\(d,2)=1}} \mu^2(d) 3^{\nu(d)}
        \sum_{t\mid d} \sum_{s\mid d/t}  \frac{Y}{(sd)^{1/2}}
        |A(st,d/t)|^{1/2}\\
   &\ll Y\sum_{q_{1},q_{2}\leq 2D} \Big(\sum_{\substack{t\mid q_{1}\\
       (t,q_{2})=1}} \mu^{2}(tq_{2}) 3^{\nu(tq_{2})} q_{1}^{-1/2}
   q_{2}^{-1/2} \Big)
   |A(q_{1},q_{2})|^{1/2}\\
   &\ll Y \Big(\sum_{q_{1},q_{2}\leq 2D} \frac{1}{q_{1}q_{2}}\sum_{\substack{t\mid
     q_{1}\\ (t,q_{2})=1}} \mu^{2}(tq_{2})3^{2\nu(tq_{2})} \Big)^{1/2}
     \Big(\sum_{q_{1},q_{2}\leq 2D} |A(q_{1},q_{2})|\Big)^{1/2} \\
   &\ll Y^{2} L^{-A-3}\ll \xi L^{-A},
\end{align*}
using Theorem \ref{th:1} in the version \eqref{eq:Cor4var},
and since
\[
   \sum_{q_{1},q_{2}} \frac{1}{q_{1}q_{2}} \sum_{t\mid q_{1}}
   3^{2\nu(t)}3^{2\nu(q_{2})} \ll
   \sum_{q_{1}}\frac{\tau(q_{1})^{5}}{q_{1}} \sum_{q_{2}}
   \frac{\tau(q_{2})^{4}}{q_{2}} \ll L^{48}.
\]

Now we need to know whether $|a|\leq\xi^{\alpha \mu}$
for all $a\in\mathcal{A}$ and some $r\geq 2$.
We have $a=(p_1+2)(p_2+2)\leq (X_1+Y+2)(X_2+Y+2)\ll Y^{2/\theta}$,
so we have to take $\mu>4/(2\theta-1)>4$. 
Write $\mu=4+\Delta$, then the right hand side of \eqref{eq:label5} is $<9$ for
$\zeta=0.360$ and $\Delta=0.628$.
This gives the value $\theta\geq 0.933$.

$r=9$ is the smallest possible value, and also $\theta\geq 0.933$
is optimal such that $r\geq 9$ can be chosen.

Therefore Theorem \ref{sth} applies with $r=9$. From this we deduce
that there always exists a $P_{9}$ in $\mathcal{A}$, their number is
at least $\gg \xi/(\log\xi)^{2}$.  So we are
done with the proof of Corollary \ref{cor:1}.
\qed

\subsection*{Acknowledgments}
The author would like to thank the referee
for a careful reading and many valuable comments.
%

\end{document}